\DeclareFontFamily{OT1}{pzc}{}
\DeclareFontShape{OT1}{pzc}{m}{it}%
             {<-> s * [1.195] pzcmi7t}{}
\DeclareMathAlphabet{\mathscr}{OT1}{pzc}%
                                 {m}{it}
\def\Z{\mathbb{Z}}
\def\N{\mathbb{N}}
\def\A{\mathbb{A}}
\def\OO{\mathcal{O}}
\def\Spec{\mathrm{Spec}}
\def\Char{\mathrm{char}}
\def\supp#1{\mathrm{supp}({#1})}
\newcommand{\sus}[1]{\mathrm{C}_*^{\mathrm{sing}}(#1)}
\newcommand{\aone}{\A^1}
\def\gm{\mathbb{G}_m} 
\def\P1{\mathbb{P}^{1}}
\def\Hom{\mathrm{Hom}}
\newcommand{\derR}{\mathbf{R}}
\DeclareMathOperator{\uHom}{\underline{Hom}} 
\def\K{\mathbf{K}}
\def\GW{\mathbf{GW}}
\def\MW{\mathrm{MW}}
\def\KMW{{\K}^{\mathrm{M\hspace{-.2ex}W}}} 
\def\H{\mathrm{H}}
\def\tc{\tilde{\mathrm{c}}}
\def\tZ{\tilde\Z}
\def\tcor#1{\widetilde{\mathrm{Cor}}_{#1}}
\def\Zar{\mathrm{Zar}}
\def\Nis{\mathrm{Nis}}
\def\Sm{\mathrm{Sm}}
\newcommand{\DMt}{\widetilde{\mathrm{DM}}^{\mathrm{eff}}(k,\Z)}
\newcommand{\DM}{\widetilde{\mathrm{DM}}(k,\Z)}
\newcommand{\tildediv}{\widetilde{\mathrm{div}}}
\newcounter{intro}
\theoremstyle{plain}
\newtheorem{thm}{Theorem}[subsection]
\newtheorem{lem}[thm]{Lemma}
\newtheorem{cor}[thm]{Corollary}
\newtheorem{prop}[thm]{Proposition}
\newtheorem*{claim*}{Claim}  
\newtheorem*{thm*}{Theorem}
\newtheorem*{problem*}{Problem}
\theoremstyle{definition}
\newtheorem{defn}[thm]{Definition}
\theoremstyle{remark}
\newtheorem{rem}[thm]{Remark}
\numberwithin{equation}{section}
\def\sAb{\mathbf{sAb}}
\def\tm{\widetilde{\mathcal{M}}^{\text{tr}}}
\def\m{\mathcal{M}}
\def\f{\mathbf{f}}
\def\tZ{\widetilde{\mathbb{Z}}}
\def\SH{\mathbf{SH}}
\def\KQ{\mathbf{KQ}}
\def\1{\mathbf{1}}
\begin{document}
\pagestyle{fancy}
\renewcommand{\sectionmark}[1]{\markright{\thesection\ #1}}
\fancyhead{}
\fancyhead[LO,R]{\bfseries\footnotesize\thepage}
\fancyhead[LE]{\bfseries\footnotesize\rightmark}
\fancyhead[RO]{\bfseries\footnotesize\rightmark}
\chead[]{}
\cfoot[]{}
\setlength{\headheight}{1cm}

\author{
Jean Fasel \and 
Paul Arne {\O}stv{\ae}r 
}

\title{{\bf A cancellation theorem for Milnor-Witt correspondences}}
\date{}
\maketitle

\begin{abstract}
We show that finite Milnor-Witt correspondences satisfy a cancellation theorem with respect to the pointed multiplicative group scheme.
This has several notable applications in the theory of Milnor-Witt motives and Milnor-Witt motivic cohomology.
\end{abstract}

\begin{footnotesize}
\setcounter{tocdepth}{1}
\tableofcontents
\end{footnotesize}

\section{Introduction}
The notion of finite Milnor-Witt correspondences was introduced in \cite{Calmes14b} and \cite{Deglise16} as a natural generalization of finite correspondences in the sense \cite{VSF}.
Intuitively one may view a Milnor-Witt correspondence as an ordinary correspondence together with a well behaved quadratic form over the function field of each irreducible component of the support of the correspondence.
This point of view allows to translate many results in the classical setting into this new framework.
In this paper we show the following result, Theorem \ref{thm:main} below, which is a cancellation theorem for Milnor-Witt correspondences. 

\begin{thm*}
Let $k$ be an infinite perfect field. 
Then for all complexes $\mathscr C$ and $\mathscr D$ of Milnor-Witt sheaves, 
the morphism
\begin{equation}
\label{cancellationisomorphism}
\Hom_{\DMt}(\mathscr C,\mathscr D)\to \Hom_{\DMt}(\mathscr C(1),\mathscr D(1))
\end{equation}
obtained by tensoring with the Tate object $\tZ(1)$ is an isomorphism.
\end{thm*}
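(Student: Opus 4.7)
\emph{Proof plan.} The strategy is to adapt Voevodsky's proof of cancellation for ordinary finite correspondences to the setting of Milnor-Witt correspondences, the core difficulty being the quadratic bookkeeping.

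First, one reduces to a statement about representables. Since $-\otimes\tZ(1)$ is a triangulated endofunctor of $\DMt$ and the motives $\tZ(X)$ for $X\in\Sm/k$ (together with their shifts) form a set of compact generators, the claim is equivalent to showing that the adjunction unit
\begin{equation*}
\tZ(Y)\;\longrightarrow\;\derR\uHom\bigl(\tZ(1),\,\tZ(Y)(1)\bigr)
\end{equation*}
is an isomorphism in $\DMt$ for every smooth $Y$ over $k$. Concretely, after applying $\sus{-}$ and evaluating on a smooth $X$, this amounts to a quasi-isomorphism of complexes computed from $\tcor{k}(X\times\gm^{\bullet},\,Y\times\gm^{\bullet})$.

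Next, following the Voevodsky-Suslin strategy, one constructs an explicit candidate inverse. Using the monoid structure of $\gm$, one defines a sequence of auxiliary MW correspondences $\rho_n\in\tcor{k}(\gm,\gm)$ supported on graphs of power maps $t\mapsto t^n$ (or piecewise-polynomial variants thereof) and carrying a natural framing of their conormal bundle. The induced pullback operators
\begin{equation*}
\rho_n^{*} : \tcor{k}(X\times\gm,\,Y\times\gm)\longrightarrow \tcor{k}(X,Y)
\end{equation*}
are shown by direct computation to be a left inverse to the cancellation map $\alpha\mapsto \alpha\otimes 1_{\gm}$. For the reverse composition, one produces an explicit MW correspondence over $\A^1$ realising an $\A^1$-homotopy between $(\alpha\otimes 1_{\gm})\circ\rho_n^{*}$ and $\alpha$, valid for $n$ large relative to the support of $\alpha$. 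Colimiting in $n$ and invoking $\A^1$-invariance in $\DMt$ then yields the desired isomorphism.

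The main technical obstacle is the quadratic data: composition in $\tcor{k}$ proceeds via pushforwards in Milnor-Witt K-theory twisted by relative canonical sheaves, so the definitions of the $\rho_n$ and of the $\A^1$-homotopy must be arranged so that all such twists cancel. Carrying this out uses the formal properties of MW transfers (projection formula, base change, and compatibility with the residue maps along $\gm\subset\A^1$), together with strict $\A^1$-invariance and the Gersten resolution for MW sheaves. The latter two inputs are precisely where the assumption that $k$ is infinite and perfect enters; modulo them, the remainder of the argument is a finite, albeit delicate, manipulation of twisted Milnor-Witt classes.
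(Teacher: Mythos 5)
Your proposal follows the same broad route as the paper: reduce to representable objects $\tZ(Y)$, compute via Suslin complexes, and adapt Voevodsky's cancellation construction to Milnor--Witt correspondences. But there are two places where the sketch as written misses what actually makes the argument go through.

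First, the description of the operators $\rho_n$ as MW correspondences ``supported on graphs of power maps $t\mapsto t^n$'' does not match the construction. In the paper, $\rho_n$ is not composition with a correspondence $\gm\to\gm$: given $\alpha\in\tcor{k}(\gm X,\gm Y)$ with support $T$, one chooses $n\geq N(\alpha)$ so that the Cartier divisor $D(g_n)$ of Voevodsky's rational function $g_n=(t_1^{n+1}-1)/(t_1^{n+1}-t_2)$ intersects $T$ properly and finitely over $X$, forms the intersection class $D(g_n)\cdot\alpha$ in Chow--Witt cohomology with support, and pushes forward along the projection to $Y$. Getting this to land back in $\tcor{k}(X,Y)$ requires a nontrivial degree computation: the class $p_*\chi(D\cdot 1)$ over $\gm$ is $\langle -1\rangle$, not $1$, which is where the quadratic corrections first appear. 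Your framing by power-map graphs elides exactly the part that needs care.

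Second, and more seriously, you do not isolate the ingredient that is genuinely new relative to Voevodsky, namely the analogue of ``the twist on $\gm^{\wedge 2}$ is $\A^1$-homotopic to $-1$.'' In the MW setting one must prove that the twist map on $\tc\{1\}\otimes\tc\{1\}$ is $\A^1$-homotopic to multiplication by the zeroth Hopf element $\epsilon=-\langle -1\rangle$, and this is not a formal consequence of functoriality: it is obtained by writing down an explicit element of $\KMW_1(k(u,t_1,t_2),\omega_{\gm})$ whose residue gives an MW correspondence $\gm\A^1\to\gm$ restricting to the required classes at $u=0,1$, together with a further explicit $\A^1$-homotopy to dispose of a leftover hyperbolic term. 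This lemma is then used in tandem with the left-inverse property and the compatibility lemmas to produce the $\A^1$-homotopy for the reverse composite; without it the argument does not close. Your proposal gestures at ``arranging that all twists cancel'' but does not name this computation, which is where the real work lies.

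Finally, there is a bridging step you pass over: the weak cancellation holds at the level of Milnor--Witt presheaves and Zariski hypercohomology, whereas $\Hom_{\DMt}$ is computed in the Nisnevich topology. Passing between the two requires the strict $\A^1$-invariance theorem for MW sheaves and a comparison of Zariski and Nisnevich hypercohomology of the Suslin complexes; this is exactly where infiniteness and perfectness of $k$ are used, so it deserves to be made explicit rather than folded into ``Gersten resolution.''
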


Here, $\DMt$ is the analogue in this new framework of Voevodsky's category of motives $\mathrm{DM}^{\mathrm{eff}}(k)$ (see \cite{Deglise16}). 
This result extends Voevodsky's pioneering work on the cancellation theorem for finite correspondences in \cite{Voevodsky10}.
It basically asserts a suspension isomorphism in Milnor-Witt motivic cohomology with respect to the multiplicative group scheme $\gm$ viewed as an algebro-geometric sphere;
the Tate object $\tZ(1)$ is defined as $\tZ(\P1)/\tZ(\infty)[-2]$ in \cite[\S3.2]{Deglise16}.

\subsection*{Notation}
Throughout the paper we work over a perfect field $k$ of $\Char(k)\neq 2$. 
We denote by $\Sm_{k}$ the category of smooth separated schemes over $\Spec(k)$.
For a natural number $i\in \N$, an integer $j\in \Z$, and a line bundle $\mathcal L$ on $X\in\Sm_{k}$, 
we consider exterior derivatives of tangent bundles and set  
\[
C_i(X,\K_{j}^{\MW},\mathcal L):=\bigoplus_{x\in X^{(i)}} \K_{j-i}^{\MW}(k(x),\wedge^i(\mathfrak m_x/\mathfrak m_x^2)^\vee\otimes_{k(x)} \mathcal L_x).
\]
Here, 
$\K_{\ast}^{\MW}$ is the Nisnevich sheaf of unramified Milnor-Witt $K$-theory as defined in \cite[\S3]{Morel08}, 
and $\mathcal L_x$ is the stalk of $\mathcal L$ at a point $x\in X^{(i)}$ of codimension $i$.
We note that $\wedge^i(\mathfrak m_x/\mathfrak m_x^2)^\vee$ is a $1$-dimensional $k(x)$-vector space for every such $x$.
By means of the $\Z[\gm]$-module structures on $\K_{\ast}^{\MW}$ and $\Z[\mathcal L^{\times}]$, 
the Nisnevich sheaf of Milnor-Witt $K$-theory twisted with respect to $\mathcal L$ is defined in \cite[\S1.2]{Calmes14b} via the sheaf tensor product 
$$
\K_{\ast}^{\MW}(\mathcal L):=\K_{\ast}^{\MW}\otimes_{\Z[\gm]}\Z[\mathcal L^{\times}].
$$
The choice of a twisting line bundle on $X$ can be viewed as an "$\aone$-local system" on $X$.
The terms $C_i(X,\K_{j}^{\MW},\mathcal L)$ together with the differentials defined componentwise in \cite[\S5]{Morel08} form the Rost-Schmid cochain complex;
we let $\H^i(X,\KMW_j,\mathcal L)$ denote the associated cohomology groups.
We define the support of $\alpha\in C_i(X,\KMW_j,\mathcal L)$ to be the closure of the set of points $x\in X^{(i)}$ of codimension $i$ for which the $x$-component of $\alpha$ is nonzero.

To establish notation, 
we recall that the Milnor-Witt $K$-theory $\K_{\ast}^{\MW}(F)$ of any field $F$ is the quotient of the free associative algebra on generators $[F^{\times}]\cup\{\eta\}$ subject to the relations
\begin{itemize}
\item[(1)]
$[a][b]=0$, \text{for} $a+b=1$ (Steinberg relation),
\item[(2)]
$[a]\eta=\eta[a]$ ($\eta$-commutativity relation),
\item[(3)]
$[ab]=[a]+[b]+\eta[a][b]$ (twisted $\eta$-logarithm relation), and 
\item[(4)]
$(2+[-1]\eta)\eta=0$ (hyperbolic relation).
\end{itemize}
The grading on $\K_{\ast}^{\MW}(F)$ is defined by declaring $\vert \eta\vert=-1$ and $\vert [a]\vert =1$ for all $a\in F^{\times}$. 
For the Grothendieck-Witt ring of symmetric bilinear forms over $F$ there is a ring isomorphism 
$$
\GW(F)\xrightarrow{\simeq}\K_{0}^{\MW}(F);
\langle a\rangle\mapsto 1+\eta[a].
$$
We shall repeatedly use the zeroth motivic Hopf element $\epsilon:=-\langle -1\rangle=-1-\eta[-1]\in \K_{0}^{\MW}(F)$ in the context of $\aone$-homotopies between Milnor-Witt sheaves.
In fact, 
$\K_{\ast}^{\MW}(F)$ is isomorphic to the graded ring of endomorphisms of the motivic sphere spectrum over $F$ \cite[Theorem 6.2.1]{Morel04b}.

As in \cite[\S1]{Deglise16}, we denote by $\tc(X)$ the Milnor-Witt presheaf on $\Sm_{k}$ defined by 
$$
Y\mapsto \tc(X)(Y):=\tcor k(Y,X),
$$ 
and by $\tZ(X)$ its associated Milnor-Witt (Nisnevich) sheaf. 
One notable difference from the setting of finite correspondences is that the Zariski sheaf $\tc(X)$ is not in general a sheaf in the Nisnevich topology,
see \cite[Example 5.12]{Calmes14b}.
We write $\tc \{1\}$ for the cokernel of the morphism of presheaves 
$$
\tc (\Spec (k))\to \tc(\gm)
$$ 
induced by the $k$-rational point $1\colon\Spec(k)\to\gm$.
This is a direct factor of $\tc(\gm)$. 
As it turns out the associated Milnor-Witt sheaf $\tZ\{1\}$ is the cokernel of the morphism $\tZ (\Spec(k))\to \tZ(\gm)$ \cite[Proposition 1.2.11(2)]{Deglise16}. 

The additive category $\tcor k$ has the same objects as $\Sm_{k}$ and with morphisms given by finite Milnor-Witt correspondences \cite[Definition 1.1.5]{Deglise16}.
The 
cartesian product in $\Sm_{k}$ and the external product of finite Milnor-Witt correspondences
furnish a symmetric monoidal structure on $\tcor k$.
Taking graphs of maps in $\Sm_{k}$ yields a faithful symmetric monoidal functor 
$$
\tilde\gamma:\Sm_{k}\to\tcor k.
$$
The tensor product $\otimes$ on Milnor-Witt presheaves defined in \cite[\S1.2.13]{Deglise16} forms part of a closed symmetric monoidal structure;
the internal Hom object $\uHom$ is characterized by the property that $\uHom(\tc(X),{\mathcal F})(Y)={\mathcal F}(X\times Y)$ for any Milnor-Witt presheaf $\mathcal F$. 
The same holds for the category of Milnor-Witt sheaves;
here, 
$\uHom(\tZ(X),{\mathcal F})(Y)={\mathcal F}(X\times Y)$ for any Milnor-Witt sheaf ${\mathcal F}$.

\subsection*{Outline of the paper}

In \S\ref{section:cdMKK} we associate to a Cartier divisor $D$ on $X$ a class $\tildediv(D)$ in the cohomology group with support $\H^1_{\vert D\vert}(X,\K_1^{\MW},\OO_X(D))$.
One of our main calculations relates the push-forward of a principal Cartier divisor on $\gm$ defined by an explicit rational function to the zeroth motivic Hopf element $\epsilon\in\K_{0}^{\MW}$.
The cancellation theorem for Milnor-Witt correspondences is shown in \S\ref{cancellationW}.
In outline the proof follows the strategy in \cite{Voevodsky10}, 
but it takes into account the extra structure furnished by Milnor-Witt $K$-theory \cite{Morel08}.
A key result states that the twist map on the Milnor-Witt presheaf $\tc\{1\}\otimes\tc\{1\}$ is $\aone$-homotopic to $\epsilon$. 
An appealing aspect of the proof is that all the calculations with rational functions, Cartier divisors, and the residue homomorphisms in the Rost-Schmid cochain complex corresponding to points 
of codimension one or valuations can be carried out explicitly.

The cancellation theorem has several consequences.
In \S\ref{etfMWmotives} we show that tensoring complexes of Milnor-Witt sheaves $\mathscr C$ and $\mathscr D$ with $\tZ\{1\}$ induces the isomorphism (\ref{cancellationisomorphism}) for the 
category of effective Milnor-Witt motives.
This is the quadratic form analogue of Voevodsky's embedding theorem for motives.
In order to prove this isomorphism we first make a comparison of  Zariski and Nisnevich hypercohomology groups in \S\ref{ZvsN}.
Several other applications for Milnor-Witt motivic cohomology groups and Milnor-Witt motives follow in \cite{Deglise16}.

In \S\ref{examples} we apply the cancellation theorem to concrete examples which are used in \cite{Calmes14b} and \cite{Deglise16} for verifying the hyperbolic and $\eta$-twisted logarithmic relations 
appearing in the proof of the isomorphism between Milnor-Witt $K$-theory and the diagonal part of the Milnor-Witt motivic cohomology ring.

Finally, 
in \S\ref{MWmcs} we study Milnor-Witt motivic cohomology as a highly structured ring spectrum.  
The cancellation theorem shows this is an $\Omega_{T}$-spectrum.
This part is less detailed since it ventures into open problems on effective slices in the sense of \cite[\S5]{SO}.

\subsection*{Acknowledgments}
The first named author wishes to thank Fr\'ed\'eric D\'eglise for many useful conversations around the proof of the main theorem.
The second author was supported by the RCN Frontier Research Group Project no. 250399.
We gratefully acknowledge the hospitality and support of Institut Mittag-Leffler during Spring 2017.
Finally we thank H{\aa}kon Andreas Kolderup for his interest and careful reading of this paper.


\section{Cartier divisors and Milnor-Witt $K$-theory}
\label{section:cdMKK}

\subsection{Cartier divisors}
We refer to \cite[Appendix B.4]{Fulton98} for background on Cartier divisors.
Suppose $X$ is a smooth integral scheme and let $D=\{(U_i,f_i)\}$ be a Cartier divisor on $X$. 
We denote the support of $D$ by $\vert D\vert$.
The line bundle $\OO_X(D)$ associated to $D$ is the $\OO_X$-subsheaf of $k(X)$ generated by $f^{-1}_i$ on $U_i$ \cite[Appendix B.4.4]{Fulton98}. 
Multiplication by $f_i^{-1}$ yields an isomorphism between $\OO_{U_i}$ and the restriction of the line bundle $\OO_X(D)$ to $U_i$.

In the following we shall associate to the Cartier divisor $D$ a certain cohomology class 
$$
\tildediv(D)\in \H^1_{\vert D\vert}(X,\KMW_1,\OO_X(D)).
$$ 
Let $x\in X^{(1)}$ and choose $i$ such that $x\in U_i$. 
Then $f_i^{-1}$ is a generator of the stalk of $\OO_X(D)$ at $x$ and a fortiori a generator of $\OO_X(D)\otimes k(X)$. 
We may therefore consider the element 
$$
[f_i]\otimes (f_i)^{-1}\in \KMW_1(k(X),\OO_X(D)\otimes k(X)),
$$ 
and its boundary under the residue homomorphism
\[
\partial_x
\colon 
\KMW_1(k(X),\OO_X(D)\otimes k(X))
\to 
\KMW_0(k(x),(\mathfrak m_x/\mathfrak m_x^2)^\vee\otimes_{k(x)}\OO_X(D)_x).
\]
defined in \cite[Theorem 3.15]{Morel08} by keeping track of local orientations, see \cite[Remark 3.21]{Morel08}.

\begin{defn}
\label{def:Cartier}
With the notation above we set 
$$
\widetilde{\mathrm{ord}}_x(D):=\partial_x([f_i]\otimes (f_i)^{-1})
\in \KMW_0(k(x),(\mathfrak m_x/\mathfrak m_x^2)^\vee\otimes_{k(x)}\OO_X(D)_x),
$$ 
and 
\[
\widetilde{\mathrm{ord}}(D):=\sum_{x\in X^{(1)}\cap \vert D\vert} \widetilde{\mathrm{ord}}_x(D)\in C_1(X,\KMW_1,\OO_X(D)).
\]
\end{defn}

A priori it is not clear whether $\widetilde{\mathrm{ord}}_x(D)$ is well-defined for any $x\in X^{(1)}\cap \vert D\vert$ since the definition seems to depend on the choice of $U_i$. 
We address this issue in the following result.
\begin{lem}
\label{lem:Cartier}
Let $x\in X^{(1)}$ be such that $x\in U_i\cap U_j$. 
Then we have 
$$
\partial_x([f_i]\otimes (f_i)^{-1})=\partial_x([f_j]\otimes (f_j)^{-1}).
$$
\end{lem}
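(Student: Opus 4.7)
The plan is to exploit the fact that on $U_i\cap U_j$ the two local equations $f_i$ and $f_j$ cut out the same divisor, so their ratio $u:=f_i/f_j$ lies in $\OO_X(U_i\cap U_j)^{\times}$. In particular $f_i^{-1}=u^{-1}f_j^{-1}$ as generators of the stalk $\OO_X(D)_x$, so the comparison of $[f_i]\otimes f_i^{-1}$ and $[f_j]\otimes f_j^{-1}$ becomes an identity inside $\KMW_1(k(X),\OO_X(D)\otimes k(X))$ that can be attacked by a direct manipulation using the defining relations of Milnor-Witt $K$-theory.

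First I would use the $\Z[\gm]$-module structure defining the twist to move the scalar $u^{-1}$ across the tensor product as $\langle u^{-1}\rangle$, giving $[f_i]\otimes f_i^{-1}=\langle u^{-1}\rangle[uf_j]\otimes f_j^{-1}$. Next I would expand $[uf_j]$ via the twisted $\eta$-logarithm relation (3), in the form $[uf_j]=[u]+\langle u\rangle[f_j]$, and simplify using $\langle u^{-1}\rangle\langle u\rangle=1$ together with the elementary consequence $\langle u^{-1}\rangle[u]=-[u^{-1}]$ (obtained by applying (3) to $u\cdot u^{-1}=1$ and noting that $[1]=0$). The expected outcome is an identity of the form
\[
[f_i]\otimes f_i^{-1}=[f_j]\otimes f_j^{-1}-[u^{-1}]\otimes f_j^{-1}.
\]

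Applying the residue homomorphism $\partial_x$ will then finish the argument: since $x\in U_i\cap U_j$ the function $u^{-1}$ is a unit in the DVR $\OO_{X,x}$, and by the explicit description of $\partial_x$ from \cite[Theorem 3.15]{Morel08} a pure unit symbol in $\KMW_1$ has vanishing residue, so the correction term disappears.

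The main obstacle I anticipate is bookkeeping the twist and the associated local orientation in the residue formula, in particular verifying that the tensor relation $\alpha\otimes v\ell=\langle v\rangle\alpha\otimes\ell$ and the computation of $\partial_x$ are applied consistently with the conventions on local orientations of \cite[Remark 3.21]{Morel08}, so that the unit term $[u^{-1}]\otimes f_j^{-1}$ is genuinely killed rather than producing a sign or twist discrepancy. Once this bookkeeping is settled, everything else is a formal consequence of relations (1)--(4) of Milnor-Witt $K$-theory.
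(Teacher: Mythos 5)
Your proposal is correct and takes essentially the same approach as the paper: express both elements over the common trivialization $f_j^{-1}$, expand the resulting symbol via the twisted $\eta$-logarithm relation (3), reduce to the identity $[f_i]\otimes f_i^{-1}=[f_j]\otimes f_j^{-1}-[f_j/f_i]\otimes f_j^{-1}$, and observe that the correction term dies under $\partial_x$ because $f_j/f_i$ is a unit in $\OO_{X,x}$. The only cosmetic difference is that the paper expands $[f_j]=[(f_j/f_i)f_i]$ directly, whereas you first pull out $\langle u^{-1}\rangle$, expand $[uf_j]$, and invoke the auxiliary identity $\langle u^{-1}\rangle[u]=-[u^{-1}]$ --- both routes yield the same intermediate equation before the residue is applied.
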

\begin{proof}
Note that $f_i/f_j\in \OO_{X}(U_i\cap U_j)^\times$ and a fortiori $f_i/f_j\in \OO_{X,x}^\times$. 
This implies the equalities
\[
[f_i]\otimes (f_i)^{-1}=[f_i]\otimes (f_j/f_i) (f_j)^{-1}=\langle f_j/f_i\rangle [f_i]\otimes (f_j)^{-1}.
\]
Using that $[f_j]=[(f_j/f_i)f_i]=[f_j/f_i]+\langle f_j/f_i\rangle [f_i]$ we deduce 
\[
[f_i]\otimes (f_i)^{-1}=[f_j]\otimes (f_j)^{-1}-[f_j/f_i]\otimes (f_j)^{-1}.
\]
Since $f_j/f_i\in \OO_{X,x}^\times$ we have $\partial_x([f_j/f_i])=0$, and the result follows.
\end{proof}

\begin{lem}
The boundary homomorphism
\[
d_1:C_1(X,\KMW_1,\OO_X(D))\to C_2(X,\KMW_1,\OO_X(D))
\]
vanishes on the class $\widetilde{\mathrm{ord}}(D)$ in Definition \ref{def:Cartier}, 
i.e., 
we have $d_1(\widetilde{\mathrm{ord}}(D))=0$.
\end{lem}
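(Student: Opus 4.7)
The plan is to establish $d_1(\widetilde{\mathrm{ord}}(D)) = 0$ by a local-to-global argument: on each open $U_i$ of the defining cover I will exhibit $\widetilde{\mathrm{ord}}(D)$ as a Rost-Schmid coboundary $d_0$ of an explicit $0$-cochain, hence it is a cocycle on $U_i$ by $d_1 \circ d_0 = 0$. The globalization will then be automatic since $d_1$ is computed point-by-point and the opens $U_i$ cover $X$.

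Concretely, for each $i$ I set
\[
\alpha_i := [f_i] \otimes (f_i)^{-1} \in \KMW_1(k(X), \OO_X(D) \otimes k(X)),
\]
and view $\alpha_i$ as a $0$-cochain in the Rost-Schmid complex $C_\ast(U_i, \KMW_1, \OO_X(D)|_{U_i})$. The key computation is
\[
d_0(\alpha_i) \;=\; \sum_{x \in U_i^{(1)}} \partial_x(\alpha_i),
\]
together with the observation that for $x \in U_i^{(1)} \setminus |D|$ the function $f_i$ lies in $\OO_{X,x}^{\times}$, so the twisted residue $\partial_x$ annihilates $\alpha_i$ by the standard vanishing of Milnor-Witt residues on units (together with compatibility with the chosen trivialization of $\OO_X(D)$ by $f_i^{-1}$, see \cite[Theorem 3.15]{Morel08}). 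Together with Lemma \ref{lem:Cartier}, this shows that $d_0(\alpha_i)$ coincides with the partial sum of $\widetilde{\mathrm{ord}}(D)$ indexed by codimension-one points of $U_i \cap |D|$, i.e.\ with the restriction of $\widetilde{\mathrm{ord}}(D)$ to $U_i$.

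Applying $d_1$ and invoking $d_1 \circ d_0 = 0$ in the Rost-Schmid complex on the smooth scheme $U_i$ gives $d_1(\widetilde{\mathrm{ord}}(D))|_{U_i} = 0$ for every $i$. To globalize, fix any $y \in X^{(2)}$ and select $i$ with $y \in U_i$; since $U_i$ is open, every $x \in X^{(1)}$ having $y$ in its closure lies in $U_i$ as well. Consequently the $y$-component of $d_1(\widetilde{\mathrm{ord}}(D))$ depends only on the summands of $\widetilde{\mathrm{ord}}(D)$ supported at points of $U_i$, and hence equals the $y$-component of $d_1(\widetilde{\mathrm{ord}}(D)|_{U_i}) = 0$. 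As $y$ is arbitrary, this yields the global vanishing.

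The principal technical point is the first step, namely the vanishing of $\partial_x([f_i] \otimes (f_i)^{-1})$ at codimension-one points $x$ where $f_i$ is a unit, which requires care with the orientation conventions entering the twisted residue. All subsequent steps are formal manipulations with the chain complex and its compatibility with open immersions.
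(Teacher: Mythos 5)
Your proof is correct and follows essentially the same route as the paper's: both rest on the identity $\widetilde{\mathrm{ord}}(D)|_{U_i} = d_{0,U_i}([f_i]\otimes f_i^{-1})$ together with $d_1\circ d_0 = 0$, and both reduce the global vanishing of the $y$-component (for $y\in X^{(2)}$) to the open $U_i$ containing $y$, the paper phrasing this via localization at $\OO_{X,y}$ and a commutative diagram of complexes while you argue directly that all codimension-one points specializing to $y$ already lie in $U_i$. Your version is if anything slightly more explicit than the paper's in justifying the key identity, since you spell out that the residue $\partial_x$ kills $[f_i]\otimes f_i^{-1}$ at points $x\in U_i^{(1)}\setminus|D|$ where $f_i$ is a unit, a step the paper leaves implicit.
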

\begin{proof}
For $x\in X^{(2)}$ we set $Y:=\Spec(\OO_{X,x})$ and consider the boundary map
\[
d_{1,Y}:C_1(Y,\KMW_1,\OO_X(D)_{\vert_Y})
\to 
\KMW_{-1}(k(x),\wedge^2(\mathfrak m_x/\mathfrak m_x^2)^\vee\otimes_{k(x)}\OO_X(D)_x).
\]
Since the boundary homomorphism $d_1$ is defined componentwise it suffices to show the restriction $\widetilde{\mathrm{ord}}(D)_Y$ of $\widetilde{\mathrm{ord}}(D)$ to $Y$ vanishes under $d_{1,Y}$.
We may choose $i$ such that $x\in U_i$ and write
$$
\widetilde{\mathrm{ord}}(D)_Y=\sum_{y\in Y^{(1)}\cap \vert D\vert} \widetilde{\mathrm{ord}}_y(D).
$$ 
Then the map $Y\to U_i$ induces a commutative diagram of complexes:
\[
\xymatrix@C=1.6em{C_0(U_i,\K_1^{\MW},\OO_X(D)_{\vert_{U_i}})\ar[r]^-{d_{0,U_i}}\ar@{=}[d] & C_1(U_i,\K_1^{\MW},\OO_X(D)_{\vert_{U_i}})\ar[r]^-{d_{1,U_i}}\ar[d] & C_2(U_i,\K_1^{\MW},\OO_X(D)_{\vert_{U_i}})\ar[d] \\
C_0(Y,\K_1^{\MW},\OO_X(D)_{\vert_Y})\ar[r]^{d_{0,Y}} & C_1(Y,\K_1^{\MW},\OO_X(D)_{\vert_Y})\ar[r]^-{d_{1,Y}} & C_2(Y,\K_1^{\MW},\OO_X(D)_{\vert_Y})}
\]
Here $\widetilde{\mathrm{ord}}(D)_Y$ is the image of $\widetilde{\mathrm{ord}}(D)_{U_i}$ under the middle vertical map.
Thus it suffices to show that $d_{1,U_i}$ vanishes on $\widetilde{\mathrm{ord}}(D)_{U_i}$. 
The latter follows from the equality
$$
\widetilde{\mathrm{ord}}(D)_{U_i}=d_{0,U_i}([f_i]\otimes (f_i)^{-1}).
$$
\end{proof}

\begin{defn}
We write $\tildediv(D)$ for the class of $\widetilde{\mathrm{ord}}(D)$ in $\H^1_{\vert D\vert}(X,\K_1^{\MW},\OO_X(D))$. 
\end{defn}

\begin{rem}
The image of $\tildediv(D)$ under the extension of support homomorphism
\[
\H^1_{\vert D\vert}(X,\K_1^{\MW},\OO_X(D))\to \H^1(X,\K_1^{\MW},\OO_X(D))
\]
equals the Euler class of $\OO_X(D)^\vee$ \cite[\S 3]{Asok13}.
\end{rem}

Having defined the cocycle associated to a Cartier divisor, we can now intersect with cocycles. Let us first recall what we mean by a proper intersection.

\begin{defn}
Let $T\subset X$ be a closed subset of codimension $d$. If $D$ is a Cartier divisor on $X$, 
we say that $D$ and $T$ intersect properly if the intersection of $T$ and $D$ is proper, 
i.e., 
if any component of $\vert D\vert \cap T$ is of codimension $\geq d+1$. 
Equivalently, 
the intersection between $D$ and $T$ is proper if the generic points of $T$ are not in $\vert D\vert$.
\end{defn}

\begin{defn}
\label{defn:intersection}
Let $\mathcal L$ be a line bundle on $X$ and $\alpha\in H_T^i(X,\K_j^{\MW},\mathcal L)$. 
We write $D\cdot \alpha$ for the class of $\tildediv(D)\cdot \alpha$ in $H_{T\cap \vert D\vert}^{i+1}(X,\K_{j+1}^{\MW},\OO_X(D)\otimes \mathcal L)$ (and $\alpha\cdot D$ for the class of $\alpha\cdot \tildediv(D)$). Note in particular that $D\cdot 1=\tildediv(D)\in \H^1_{\vert D\vert}(X,\K_1^{\MW},\OO_X(D))$.
\end{defn}

\begin{rem}
\label{rem:commutativity}
It follows from \cite[Lemmas 4.19 and 4.20]{Fasel07} or \cite{Calmes17} that 
\[
D\cdot \alpha=\langle (-1)^j\rangle \alpha\cdot D.
\]
\end{rem}

Recall the canonical bundle of $X\in\Sm_{k}$ is defined by $\omega_{X/k}:= \wedge^{d_{X}}\Omega^{1}_{X/k}$.
Here, 
$\wedge^{d_{X}}$ is the exterior power to the dimension $d_{X}$ of $X$ and $\Omega^{1}_{X/k}$ is the cotangent bundle on $X$.

\begin{lem}
\label{lem:degree}
Let $D$ be the principal Cartier divisor on $\gm=\Spec (k[t^{\pm 1}])$ defined by 
$$
g:=(t^{n+1}-1)/(t^{n+1}-t)\in k(t).
$$ 
Denote by $\OO_{\gm}\to \OO_{\gm}(D)$ and $\OO_{\gm}\to \omega_{\gm/k}$ the evident trivializations, 
and let 
\[
\chi:\H^1_{\vert D\vert}(\gm,\K_1^{\MW},\OO_{\gm}(D))\to \H^1_{\vert D\vert}(\gm,\K_1^{\MW},\omega_{\gm/k})
\]
denote the induced isomorphism. 
For $p:\gm\to \Spec (k)$ and the induced push-forward map
\[
p_*:\H^1_{\vert D\vert}(\gm,\K_1^{\MW},\omega_{\gm/k})\to \K_0^{\MW}(k),
\]
we have $p_*\chi(D\cdot 1)=\langle -1\rangle$.
\end{lem}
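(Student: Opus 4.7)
My plan is to compute $p_*\chi(D\cdot 1)$ by reducing to a single local residue at $t=0$ via residue reciprocity on $\P1$, and then carrying out that local computation by hand. First, expand $g=\Phi_{n+1}(t)/[t\,\Phi_n(t)]$, with $\Phi_k(t):=1+t+\cdots+t^{k-1}$. On $\gm$ the support $|D|$ is the disjoint union of $V(\Phi_{n+1})$ (simple zeros of $g$) and $V(\Phi_n)$ (simple poles). Extended to $\P1$, the divisor $D_{\P1}$ acquires an additional simple pole at $t=0$ from the $t$-factor in the denominator, while $g(\infty)=1$ forces $\infty\notin|D_{\P1}|$.

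Next, invoke Milnor--Witt residue reciprocity on $\P1$ for the rational cocycle $[g]\otimes dt\in\KMW_1(k(\P1),\omega_{\P1/k}\otimes k(\P1))$, obtained from $[g]\otimes g^{-1}$ via the rational identification $g^{-1}\mapsto dt$ (which is $\chi$ on $\gm$). Summing the local push-forwards of the residues $\partial_x([g]\otimes dt)$ over $x\in\P1_{(1)}$ gives zero in $\KMW_0(k)$. Rewriting $dt=-s^{-2}\,ds$ near $\infty$ (with $s=1/t$) and using that $g(\infty)=1$ is a unit, the residue at $\infty$ vanishes. Thus
\[
p_*\chi(\tildediv(D))\;=\;-\,\partial_0([g]\otimes dt).
\]

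To compute the residue at $0$: near $t=0$, $g=u\cdot t^{-1}$ with $u=(1-t^{n+1})/(1-t^n)\in\OO^{\times}_{\P1,0}$ and $u(0)=1$, while $t$ is a uniformizer and $dt$ a regular generator of $\omega_{\P1,0}$. Expand $[g]=[u]-\langle ut\rangle[t]$ and $\langle ut\rangle[t]=[t]+\eta[u][t]+\eta[t]^2+\eta^2[u][t]^2$; since $\bar u=1$ only the $[t]$ and $\eta[t]^2$ terms survive under $\partial_0$. The Milnor--Witt identity $[t]^2=\epsilon[t][-1]$ (from $[t][-t]=0$) combined with the hyperbolic relation $\eta[-1]=\langle -1\rangle-1$ then produces the scalar $-\langle -1\rangle$ times the canonical generator of $(\mathfrak m_0/\mathfrak m_0^2)^\vee\otimes\omega_{\P1,0}$ represented by $t^*\otimes dt$. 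Under the conormal isomorphism $t^*\otimes dt\mapsto 1\in k$ and the trivial push-forward $\KMW_0(k)\to\KMW_0(k)$ (since $k(0)=k$), the residue evaluates to $\epsilon$, whence $p_*\chi(\tildediv(D))=-\epsilon=\langle -1\rangle$.

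The main obstacle is setting up residue reciprocity rigorously in the twisted Milnor--Witt setting: $\omega_{\P1/k}\cong\OO_{\P1}(-2)$ has no global trivialization, so the vanishing of summed residues must be invoked via a push-forward formalism with local trivializations of $\omega_{\P1/k}$ patched compatibly on $\gm\cup\{0\}$ (by $dt$) and near $\infty$ (by $ds$). A secondary but essential difficulty is the local residue at $0$, which requires the $\eta$-aware Milnor--Witt identities beyond classical Milnor $K$-theory (where the analogous push-forward would simply equal the degree $1$); the correction $\epsilon=-\langle -1\rangle$ is the genuinely new input coming from the quadratic refinement.
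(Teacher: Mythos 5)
Your proof is correct and follows exactly the paper's strategy: extend $D$ to a principal divisor $\overline D$ on $\P1$, use that the total push-forward of $\tildediv(\overline D)=d_0([g]\otimes g^{-1})$ to $\KMW_0(k)$ vanishes (your "residue reciprocity" is phrased in the paper as well-definedness of $p_*$ on $\H^1(\P1,\KMW_1,\omega_{\P1/k})$), observe the residue at $\infty$ vanishes since $g(\infty)=1$, and compute the residue at $0$ to be $\epsilon$, yielding $p_*\chi(D\cdot 1)=-\epsilon=\langle -1\rangle$. The local computation at $0$ is carried out a bit differently (you expand $\langle ut\rangle[t]$ term by term; the paper uses $[t^{-1}]=\epsilon[t]$), but these are the same calculation.
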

\begin{proof}
Consider the sequence
\[
\xymatrix{\K_1^{\MW}(k(t),\omega_{k(t)/k})\ar[r]^-{d_0} & C_1(\mathbb{P}^1,\K_1^{\MW},\omega_{\mathbb{P}^1/k})\ar[r] & \K_0^{\MW}(k),}
\]
where the rightmost map is a sum of the (cohomological) transfer maps defined for instance in \cite[Definition 4.26]{Morel08}, 
see also \cite[\S 10.4]{Fasel08a}.
Since the push-forward map
\[
p_*:\H^1(\mathbb{P}^1,\K_1^{\MW},\omega_{\mathbb{P}^1/k})\to \K_0^{\MW}(k)
\]
is well defined \cite[Example 4.4]{Calmes14b}, 
it follows that the composition in the sequence is trivial, 
see also \cite[Chapitre 8]{Fasel08a}. 
Let $\overline D$ be the principal Cartier divisor on $\mathbb P^1$ defined by $g$. 
The evident trivialization $\OO_{\mathbb P^1}\to \OO_{\mathbb P^1}(\overline D)$ together with the canonical isomorphism
\[
\H^1(\mathbb{P}^1,\K_1^{\MW},\omega_{\mathbb{P}^1/k})\simeq \H^1(\mathbb{P}^1,\K_1^{\MW})
\]
yield an isomorphism 
$$
\overline \chi
\colon
\H^1_{\vert \overline D\vert}(\mathbb P^1,\K_1^{\MW},\OO_{\mathbb P^1}(\overline D))
\xrightarrow{\simeq}
\H^1_{\vert \overline D\vert}(\mathbb P^1,\K_1^{\MW},\omega_{\mathbb P^1/k}),
$$ 
and we have $p_*\overline\chi(\overline D\cdot 1)=0$. 
The restriction of the intersection product $\overline D\cdot 1$ to $\gm$ equals $D\cdot 1$, 
so that 
$$
\overline D\cdot 1=D\cdot 1+\widetilde{\mathrm{ord}}_\infty(\overline D)+\widetilde{\mathrm{ord}}_0(\overline D).
$$ 
It is straightforward to check that $\widetilde{\mathrm{ord}}_\infty(\overline D)=0$. 
Next we compute $\widetilde{\mathrm{ord}}_0(\overline D)$. 
Using the equality
$$
g=t^{-1}(t^{n+1}-1)/(t^n-1),
$$ 
we obtain
\begin{eqnarray*}
\partial_0([g]\otimes g^{-1}) & = & \partial_0([(t^{n+1}-1)/(t^n-1)]\otimes g^{-1}+\langle (t^{n+1}-1)/(t^n-1)\rangle [t^{-1}]\otimes g^{-1}) \\
 & = & \partial_0(\langle (t^{n+1}-1)/(t^n-1)\rangle [t^{-1}]\otimes g^{-1}) \\
 & = & \partial_0(\langle (t^{n+1}-1)/(t^n-1)\rangle\cdot \epsilon[t]\otimes g^{-1}) \\
 & = & \epsilon\otimes (t\otimes g^{-1}).
\end{eqnarray*}
Thus $\overline \chi(\overline D\cdot 1)=\chi(D\cdot 1)+\epsilon$, 
where $\epsilon$ is seen as an element of $\KMW_0(k)\subset C_1(\mathbb{P}^1,\K_1^{\MW},\omega_{\mathbb{P}^1/k})$. Finally, we have 
\[
0=p_*(\overline \chi(\overline D\cdot 1))=p_*(\chi(D\cdot 1)+\epsilon)=p_*(\chi(D\cdot 1))+\epsilon,
\]
and it follows that $p_*(\chi(D\cdot 1))=-\epsilon=\langle -1\rangle$.
\end{proof}


\section{Cancellation for Milnor-Witt correspondences}
\label{cancellationW}

If $X,Y\in\Sm_{k}$ we follow the convention in \cite[\S 4]{Voevodsky10} by letting $XY$ be short for the fiber product $X\times_{k} Y$.
We denote the dimension of $Y\in\Sm_{k}$ by $d_Y$.
Suppose $\alpha\in \tcor k(\gm X,\gm Y)$ is a finite Milnor-Witt correspondence. 
It has a well-defined support by \cite[Definition 4.6]{Calmes14b} which we denote by $T:=\mathrm{supp}(\alpha)$, 
so that 
\[
\alpha\in \H^{d_Y+1}_T(\gm X \gm Y ,\KMW_{d_Y+1},\omega_{\gm Y})
\]
where $\omega_{\gm Y}$ is the pull-back of the canonical sheaf of $\gm Y$ along the relevant projection.
Let $t_1$ and $t_2$ denote the (invertible) global sections on $\gm X \gm Y$ obtained by pulling back the coordinate on $\gm$ under the projections on the first and third factor of $\gm X\gm Y$,
respectively.
Recall from \cite[\S 4]{Voevodsky10} that there exists a natural number $N(\alpha)\in\N$ such that for all $n\geq N(\alpha)$ the rational function 
$$
g_n:=(t_1^{n+1}-1)/(t_1^{n+1}-t_2)
$$ 
defines a principal Cartier divisor $D(g_n)$ on $\gm X \gm Y$ having the property that $D(g_n)$ and $T$ intersect properly, 
and further that $\vert D(g_n)\vert \cap T$ 
is finite over $X$. 
If $n\geq N(\alpha)$, 
we say that $D(g_n)$ is \emph{defined relative to $\alpha$}. Unless otherwise specified, we always assume in the sequel that the Cartier divisors are defined relative to the considered cycles, 
e.g., 
for $D:=D(g_n)$ and $\alpha$.
 
By using the trivialization of $\OO(D)$, 
we see that 
$$
D\cdot \alpha \in \H^{d_Y+2}_{T\cap \vert D\vert} (\gm X \gm Y,\KMW_{d_Y+2},\omega_{\gm Y}).
$$ 
By permuting the first two factors in the product $\gm X\gm Y$ we obtain
$$
D\cdot \alpha\in \H^{d_Y+2}_{T\cap \vert D\vert} (X\gm\gm Y,\KMW_{d_Y+2},\omega_{\gm Y}).
$$ 
Similarly, 
by using the trivialization of the canonical line bundle $\omega_{\gm/k}$ and the canonical isomorphism 
\begin{equation}\label{eq:orientation}
\omega_{\gm\gm Y}\simeq p^*\omega_{\gm/k}\otimes \omega_{\gm Y}
\end{equation}
where $p$ is the projection onto the first factor, we obtain that  $D\cdot \alpha$ can be viewed as an element of $\tcor k(X,\gm\gm Y)$. 

\begin{defn}
Let $\rho_n(\alpha)\in \tcor k(X,Y)$ be the composite of $D\cdot \alpha\in\tcor k(X,\gm\gm Y)$ with the projection map $\gm\gm Y\to Y$. 
\end{defn}

\begin{lem}
\label{lem:minus1}
For $\alpha\in \tcor k(X,Y)$ and $1\times\alpha\in \tcor k(\gm X,\gm Y)$ we have
$$
\rho_n(1\times\alpha)=\alpha.
$$
\end{lem}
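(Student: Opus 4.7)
The plan is to unpack $\rho_n(1\times\alpha)$ and reduce the computation to Lemma \ref{lem:degree} via a base change onto the diagonal of the two $\gm$-factors. First, I would identify the support $T$ of $1\times\alpha$ in $\gm X\gm Y$ as the image of the closed immersion
\[
\iota\colon \gm\times S\hookrightarrow \gm X\gm Y,\quad (t,(x,y))\mapsto (t,x,t,y),
\]
where $S:=\mathrm{supp}(\alpha)\subset XY$. The crucial observation is that the restriction of $g_n = (t_1^{n+1}-1)/(t_1^{n+1}-t_2)$ to $T$ (where $t_1=t_2=t$) equals $(t^{n+1}-1)/(t^{n+1}-t)$, which is precisely the rational function appearing in Lemma \ref{lem:degree}. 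Consequently, a base-change argument in the twisted Rost-Schmid complex yields $\iota^*\tildediv(D) = \pi_\gm^*\tildediv(D_0)$, where $D_0$ is the Cartier divisor on $\gm$ from Lemma \ref{lem:degree} and $\pi_\gm\colon \gm\times S\to\gm$ is the first projection.

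Next, by a projection formula for the intersection product in cohomology with support, I would identify $\tildediv(D)\cdot(1\times\alpha)$ with the class $\iota_*(\tildediv(D_0)\times\alpha)$ supported on $\iota(|D_0|\times S)$, which is finite over $XY$. Composing with the projection $\gm\gm Y\to Y$ in $\tcor k$ then amounts to pushing forward along $\mathrm{id}_X\times \mathrm{pr}_Y\colon X\gm\gm Y \to XY$ after applying the orientation twists in the definition of $\rho_n$. Since the class is concentrated along the diagonal $\gm\hookrightarrow \gm\gm$, this pushforward factors through the pushforward $p_*$ of Lemma \ref{lem:degree} applied to $\tildediv(D_0)$, which contributes a factor of $\langle -1\rangle$ tensored with the untouched class $\alpha$.

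The final step is to check that this $\langle -1\rangle$ is cancelled by a compensating sign arising from the permutation of factors $\gm X\gm Y\to X\gm\gm Y$ together with the canonical isomorphism $\omega_{\gm\gm Y}\simeq p^*\omega_{\gm/k}\otimes\omega_{\gm Y}$; since $\langle -1\rangle^2 = 1$ in $\KMW_0$, the two signs cancel and we recover $\rho_n(1\times\alpha) = \alpha$. The hard part will be the careful bookkeeping of these orientation signs together with the precise verification of the base-change identity $\iota^*\tildediv(D)=\pi_\gm^*\tildediv(D_0)$ and of the projection formula commuting the intersection product past the immersion $\iota$ in the twisted Rost-Schmid complex.
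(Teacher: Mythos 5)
Your overall strategy is the right one and is essentially the same as the paper's: express $1\times\alpha$ in terms of the diagonal $\Delta\colon\gm\to\gm\gm$, observe that the divisor $D=D(g_n)$ restricts along the diagonal to the divisor $D''$ of Lemma \ref{lem:degree}, push forward, and cancel a pair of $\langle -1\rangle$'s. You also correctly anticipate that a factor of $\langle -1\rangle$ must cancel the $\langle -1\rangle$ produced by $p_*\chi(D''\cdot 1)$ in Lemma \ref{lem:degree}.

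The gap is in where you locate that compensating $\langle -1\rangle$. You attribute it to ``the permutation of factors $\gm X\gm Y\to X\gm\gm Y$ together with the canonical isomorphism $\omega_{\gm\gm Y}\simeq p^*\omega_{\gm/k}\otimes\omega_{\gm Y}$.'' In the paper's conventions that permutation and the orientation isomorphism (\ref{eq:orientation}) are built once and for all into the definition of $\rho_n$ and do not by themselves produce a sign. The $\langle -1\rangle$ actually arises from the skew-commutativity of the intersection product, Remark \ref{rem:commutativity}: writing $1\times\alpha=(\Delta\times 1)_*(\langle 1\rangle)\cdot p_1^*\alpha$, one must commute the divisor past the push-forward before applying the projection formula, and since $(\Delta\times 1)_*(\langle 1\rangle)$ sits in degree $j=1$ this commutation contributes $\langle(-1)^1\rangle=\langle -1\rangle$. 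Correspondingly, your claimed identity $\tildediv(D)\cdot(1\times\alpha)=\iota_*(\tildediv(D_0)\times\alpha)$ is off by exactly this $\langle -1\rangle$: carefully carrying it out via $1\times\alpha=(\Delta\times 1)_*(\langle 1\rangle)\cdot p_1^*\alpha$ and the projection formula as in the paper gives $D\cdot(1\times\alpha)=\langle -1\rangle\,(\Delta\times 1)_*\bigl((p_2')^*D''\bigr)\cdot p_1^*\alpha$. In short, the cancellation is right but the bookkeeping you propose to justify it would not reveal the sign; you want Remark \ref{rem:commutativity}, not the orientation conventions of the factor permutation.
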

\begin{proof}
Let $\Delta:\gm\to \gm\gm$ be the diagonal embedding, 
and consider the commutative diagram of Cartesian squares in $\Sm_{k}$, 
where all the maps apart from $\Delta$ and $\Delta\times 1$ are projections:
\[
\xymatrix{
\gm XY \ar[r]^{p_2^\prime}\ar[d]_-{\Delta\times 1} & \gm\ar[d]^-{\Delta} \\ 
\gm\gm XY\ar[d]_-{p_1}\ar[r]^-{p_2} & \gm\gm\ar[d] \\
XY\ar[r]^-p & \Spec (k) 
}
\]
We have $1\times \alpha=p_2^*\Delta_*(\langle 1\rangle)\cdot p_1^*\alpha$ by definition, 
while the base change formula \cite[Proposition 3.2]{Calmes14b} imply $1\times\alpha=(\Delta\times 1)_*(\langle 1\rangle)\cdot p_1^*\alpha$.
On the other hand, 
let $D^\prime$ be the principal Cartier divisor on $\gm\gm$ defined by 
$$
g_n(t_1,t_2):=(t_1^{n+1}-1)/(t_1^{n+1}-t_2),
$$ 
and let $D^{\prime\prime}$ be the principal Cartier divisor on $\gm$ defined by 
$$
g_n(t):=(t^{n+1}-1)/(t^{n+1}-t).
$$ 
Since $\Delta^*(D^\prime)=D^{\prime\prime}$ and $p_2^*D^\prime=D$ we obtain $D\cdot (1\times\alpha)=p_2^*D^\prime\cdot (\Delta\times 1)_*(\langle 1\rangle)\cdot p_1^*\alpha$. Now $p_2^*D^\prime\cdot (\Delta\times 1)_*(\langle 1\rangle)=\langle -1\rangle (\Delta\times 1)_*(\langle 1\rangle)\cdot p_2^*D^\prime$. Using $(1\times\Delta)^*(p_2^*D^\prime)=(p_2^\prime)^*D^{\prime\prime}$ and the projection formula, 
we find 
$$
D\cdot (1\times\alpha)=\langle -1\rangle(1\times\Delta)_*((p_2^\prime)^*D^{\prime\prime})\cdot p_1^*\alpha.
$$ 
By composing this Milnor-Witt correspondence with $p_1$, 
which is tantamount to applying $(p_1)_*$ by \cite[Example 4.13]{Calmes14b}, 
we obtain
\[
\rho_n(1\times\alpha)
=
(p_1)_*(\langle -1\rangle(1\times\Delta)_*((p_2^\prime)^*D^{\prime\prime})\cdot p_1^*\alpha)
=
\langle -1\rangle(p^*q_*(D^{\prime\prime}\cdot \langle 1\rangle))\cdot \alpha,
\]
where $q:\gm\to \Spec(k)$. 
The result follows now from Lemma \ref{lem:degree}.
\end{proof}

\begin{lem}
For the Milnor-Witt correspondence 
\[
e_X:
\gm X
\xrightarrow{q}
X
\xrightarrow{\{1\}\times \mathrm{Id}}
\gm X,
\]
where $q$ is the projection map, 
we have $\rho_n(e_X)=0$ for any $n\in\N$.
\end{lem}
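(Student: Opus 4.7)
The plan is to show the stronger statement that already $D\cdot e_X=0$ as an element of $\tcor k(X,\gm\gm X)$, whence $\rho_n(e_X)=0$ follows immediately (composition with the projection $\gm\gm X\to X$ yields zero). So I would focus entirely on computing the intersection product $\tildediv(D)\cdot e_X$ in cohomology with proper support, where $D:=D(g_n)$ with $g_n=(t_1^{n+1}-1)/(t_1^{n+1}-t_2)$.

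First I would identify the support. By construction, $e_X$ is the graph of the morphism $\gm X\to \gm X$, $(t,x)\mapsto (1,x)$, so
\[
T:=\mathrm{supp}(e_X)=\{(t_1,x_1,t_2,x_2)\in \gm X \gm X : t_2=1,\ x_1=x_2\},
\]
and projection onto the first two coordinates identifies $T$ with $\gm X$. Writing $i\colon T\hookrightarrow \gm X \gm X$ for the closed embedding, the class $e_X$ is then the pushforward $i_*(\langle 1\rangle)$, with the appropriate trivialisations of the tangent and orientation data entering its definition.

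Next I would run a projection-formula argument. Since $D$ is defined relative to $e_X$, $T$ and $|D|$ meet properly, so $D$ pulls back to a well-defined Cartier divisor on $T$, and the Rost--Schmid projection formula for closed immersions (as in \cite{Fasel07,Calmes17}) gives
\[
\tildediv(D)\cdot e_X \;=\; i_*\bigl(i^*\tildediv(D)\bigr) \;=\; i_*\bigl(\tildediv(i^*D)\bigr).
\]
Now $i^*D$ is the principal Cartier divisor on $T$ cut out by $i^*g_n$. From the explicit formula for $g_n$ and the relation $t_2\equiv 1$ on $T$ one has
\[
i^* g_n \;=\; \frac{t_1^{n+1}-1}{t_1^{n+1}-1} \;=\; 1 \quad\in\ \OO_T(T)^\times,
\]
so $i^*D$ is the zero Cartier divisor and hence $\tildediv(i^*D)=0$ by Definition \ref{def:Cartier}. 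This gives $\tildediv(D)\cdot e_X=0$, so $D\cdot e_X=0$ in $\tcor k(X,\gm\gm X)$ and therefore $\rho_n(e_X)=0$.

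The main obstacle I anticipate is justifying the identity $i^*\tildediv(D)=\tildediv(i^*D)$ in cohomology with supports. Since $\tildediv$ is defined componentwise via the residue maps of \cite[Theorem 3.15]{Morel08}, this comes down to a local verification at each codimension-one point of $T\cap|D|$: one must check that pulling back the element $[g_n]\otimes g_n^{-1}$ from the generic point of $\gm X\gm Y$ to the generic point of $T$ commutes with the residue homomorphisms, using the canonical trivialisations $\OO(D)\simeq \OO$ and $\OO(i^*D)\simeq\OO$ induced by $g_n$ and $i^*g_n=1$. Given the proper intersection hypothesis this is a routine (but careful) unwinding of the Rost--Schmid formalism, entirely analogous to the base-change computation already used in the proof of Lemma \ref{lem:minus1}.
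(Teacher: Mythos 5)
Your proposal is correct and takes essentially the same approach as the paper: identify $e_X$ as the pushforward of $\langle 1\rangle$ along the closed immersion $f\colon \gm X\to \gm X\gm X$, $(t,x)\mapsto (t,x,1,x)$ (your $i\colon T\hookrightarrow \gm X\gm X$ is the same map), apply the projection formula, and observe that $f^*g_n=1$ so the pulled-back Cartier divisor is trivial. The compatibility $i^*\tildediv(D)=\tildediv(i^*D)$ that you flag as the main subtlety is indeed the content implicitly absorbed into the paper's phrase "the result follows now from the projection formula since $f^*D(g_n)=1$."
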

\begin{proof}
As in \cite[Lemma 4.3(ii)]{Voevodsky10}, 
note that the cycle representing the above composite is the image of the unit $\langle 1\rangle\in \KMW_0(\gm X)$ under the push-forward homomorphism
\[
f_*
\colon
\KMW_0(\gm X)
\to 
\H^{d_X+1}_{f(\gm X)}(\gm X\gm X,\KMW_{d_X+1}, \omega_{\gm X})
\]
for the map $f:\gm X\to \gm X\gm X$ given as the diagonal on $X$ and by $t\mapsto (t,1)$ on $\gm$. 
The result follows now from the projection formula since $f^*D(g_n)=1$ for all $n\in \N$. 
\end{proof}

\begin{lem}
\label{lem:composite}
Suppose $D:=D(g_n)$ is defined relative to $\alpha\in \tcor k(\gm X,\gm Y)$. 
Then for any Milnor-Witt correspondence $\beta:X^\prime\to X$, 
$D$ is defined relative to $\alpha\circ (1\times \beta)$, 
and we have
\[
\rho_n(\alpha\circ (1\times \beta))=\rho_n(\alpha)\circ \beta.
\]
\end{lem}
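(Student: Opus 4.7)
The plan is to reduce the equality $\rho_n(\alpha \circ (1 \times \beta)) = \rho_n(\alpha) \circ \beta$ to the intermediate identity
\[
D \cdot (\alpha \circ (1 \times \beta)) = (D \cdot \alpha) \circ \beta
\]
in $\tcor k(X', \gm \gm Y)$. Composing both sides with the projection $\gm \gm Y \to Y$ and invoking the associativity of composition in $\tcor k$ then yields the desired equality.

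Before establishing the identity, I verify that $D$ is defined relative to $\alpha \circ (1 \times \beta)$. On the auxiliary space $W := \gm X' \gm X \gm Y$ with partial projections $p_{12}, p_{13}, p_{23}$, the support of $\alpha \circ (1 \times \beta)$ in $\gm X' \gm Y$ is contained in the image under $p_{13}$ of $p_{12}^{-1}(\mathrm{supp}(1 \times \beta)) \cap p_{23}^{-1}(\mathrm{supp}(\alpha))$. Since $\mathrm{supp}(1 \times \beta)$ lies over the diagonal of the first two $\gm$-factors crossed with $\mathrm{supp}(\beta)$, with $\mathrm{supp}(\beta)$ finite over $X'$, and since $|D_{\gm X \gm Y}| \cap \mathrm{supp}(\alpha)$ is finite over $X$ by hypothesis, composing these finite covers shows that $|D| \cap \mathrm{supp}(\alpha \circ (1 \times \beta))$ is finite over $X'$; the proper intersection condition is then immediate.

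For the main identity I work on $W$. By the projection formula of \cite[Example 4.13]{Calmes14b}, the left-hand side equals
\[
(p_{13})_*\bigl(p_{13}^*D \cdot p_{12}^*(1 \times \beta) \cdot p_{23}^*\alpha\bigr).
\]
Following the strategy in the proof of Lemma \ref{lem:minus1}, I rewrite $p_{12}^*(1 \times \beta)$ as a push-forward along the embedding of $\{s_1 = s_2\} \cong X'X \gm \gm Y$ into $W$ and apply the base change formula \cite[Proposition 3.2]{Calmes14b}. This presents both sides of the desired identity as a single push-forward from $X'X \gm \gm Y$ of the same product of (restricted) classes. The crucial geometric input is that the two pullbacks $p_{13}^*D$ and $p_{23}^*D$ on $W$ are defined by the rational functions $g_n(s_1, s_3)$ and $g_n(s_2, s_3)$ respectively, which agree on the locus $\{s_1 = s_2\}$ where the computation takes place. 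In particular, their quotient is a unit in a Zariski neighborhood of this locus, so by a residue computation analogous to that of Lemma \ref{lem:Cartier} the two Cartier divisor classes give the same contribution when intersected with any cycle supported there.

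The main obstacle will be the careful bookkeeping required to reconcile the factor permutations and the orientation trivializations used to interpret $D \cdot \alpha$ as an element of $\tcor k(X, \gm \gm Y)$ via the isomorphism \eqref{eq:orientation}, together with the signs from Remark \ref{rem:commutativity} when commuting classes of different degrees past one another. Once these bookkeeping issues are resolved, the geometric content -- namely that $p_{13}^*D$ and $p_{23}^*D$ coincide on the diagonal of the first two $\gm$-factors -- is immediate from the explicit form of $g_n$.
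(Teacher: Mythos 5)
Your proposal is correct and follows essentially the paper's route: both reduce to the identity $D\cdot(\alpha\circ(1\times\beta)) = (D\cdot\alpha)\circ\beta$ in $\tcor k(X',\gm\gm Y)$ and then push the computation onto $X'X\gm\gm Y$ via the base change formula before concluding with the projection formula and (skew-)commutativity. The paper's version works directly on $X'X\gm\gm Y$ --- so the two relevant pullbacks of $D$ are related simply by $\pi^*D''=D'$, rather than by your observation that $p_{13}^*D$ and $p_{23}^*D$ agree on the diagonal of $W$ --- and it carries out explicitly the sign bookkeeping, yielding the factor $\langle(-1)^{d_Y+1}\rangle$, that your sketch defers.
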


\begin{proof}
For the fact that $D$ is defined relative to $\alpha\circ (1\times \beta)$ we refer to \cite[Lemma 4.4]{Voevodsky10}. 
For the second assertion we note that $\rho_n(\alpha)\circ \beta$ is the composite
\[
X^\prime\stackrel{\beta}\to X\xrightarrow{D\cdot \alpha} \gm\gm Y\stackrel{q}\to Y,
\]
while $\rho_n(\alpha\circ (1\times \beta))$ is the composite
\[
X^\prime\xrightarrow{D\cdot(\alpha\circ (1\times \beta))} \gm\gm Y\stackrel{q}\to Y.
\]
Thus it suffices to prove that 
$$
D\cdot(\alpha\circ (1\times \beta))=(D\cdot \alpha)\circ \beta.
$$ 
Consider the following diagram
\[
\begin{gathered}
\xymatrix{
X^\prime\gm\gm Y \ar@/^2em/[rrrd]^-{s_Y}\ar@/_1.3em/[rddd]_-{s_X} & & & \\
 & X^\prime X\gm\gm Y\ar[r]^-{q_{XY}}\ar[d]_-{p_{X^\prime X}}\ar[lu]_-{\pi} & X\gm\gm Y\ar[r]^-{q_Y}\ar[d]^-{p_Y} & \gm\gm Y \\
 & X^\prime X\ar[r]_-{q_X}\ar[d]_-{p_{X^\prime}} & X & \\
 & X^\prime & & 
}
\end{gathered}
\]
By definition and functoriality of the pull-back, 
\[
(D\cdot \alpha)\circ \beta=\pi_*(p_{X^\prime X}^*\beta\cdot q_{XY}^*(D\cdot \alpha))=\pi_*(p_{X^\prime X}^*\beta\cdot D^\prime\cdot q_{XY}^*\alpha)
\]
where $D^\prime$ is the pull-back of $D$ along the projection $X^\prime X\gm\gm Y\to \gm\gm$. On the other hand, a direct computation involving the base change formula (\cite[Proposition 3.2]{Calmes14b}) shows that 
\[
\alpha\circ (1\times \beta)=\pi_*(p_{X^\prime X}^*\beta\cdot q_{XY}^*\alpha).
\]
Let then $D^{\prime\prime}$ be the pull-back of $D$ along the projection $X^\prime\gm\gm Y\to \gm\gm$. Using respectively the (skew-)commutativity of Chow-Witt groups, the projection formula and the commutativity again, we find
\begin{eqnarray*}
D^{\prime\prime}\cdot \pi_*(p_{X^\prime X}^*\beta\cdot q_{XY}^*\alpha) & = &\langle (-1)^{d_Y+1}\rangle \pi_*(p_{X^\prime X}^*\beta\cdot q_{XY}^*\alpha)\cdot  D^{\prime\prime} \\
 & = & \langle (-1)^{d_Y+1}\rangle \pi_*(p_{X^\prime X}^*\beta\cdot q_{XY}^*\alpha\cdot D^\prime) \\
 & = & \pi_*(p_{X^\prime X}^*\beta\cdot D^\prime \cdot q_{XY}^*\alpha).
\end{eqnarray*}
\end{proof}

\begin{lem}
\label{lem:rightcomposition}
Suppose $D:=D(g_n)$ is defined relative to $\alpha\in \tcor k(\gm X,\gm Y)$. 
Then for any map of schemes $f:X^\prime\to Y^\prime$, $D(g_n)$ is defined relative to $\alpha\times f$, 
and we have 
$$
\rho_n(\alpha\times f)=\rho_n(\alpha)\times f.
$$
\end{lem}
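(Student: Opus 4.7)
The plan is to follow the template of Lemma \ref{lem:composite}, exploiting the fact that the rational function $g_n=(t_1^{n+1}-1)/(t_1^{n+1}-t_2)$ depends only on the two coordinates $t_1,t_2$ of the two $\gm$-factors. Consequently, the Cartier divisor that $g_n$ defines on $\gm X X'\gm Y Y'$ is simply the pull-back of the divisor $D$ on $\gm X\gm Y$ along the smooth projection $\pi$ that forgets the factors $X'$ and $Y'$.

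First I would verify that $D(g_n)$ is defined relative to $\alpha\times f$. The support $\mathrm{supp}(\alpha\times f)$ maps under $\pi$ into $\mathrm{supp}(\alpha)$, and its fibers over $XX'$ sit above the fibers of $\mathrm{supp}(\alpha)\cap\vert D\vert$ over $X$ (times the graph of $f$). Therefore both the proper intersection with $\vert D\vert$ and the finiteness over $XX'$ are inherited from the corresponding hypotheses relative to $\alpha$.

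Second I would establish the key intersection-theoretic identity
\[
D\cdot(\alpha\times f)=(D\cdot\alpha)\times f
\]
in the appropriate Chow--Witt group on (a permutation of) $XX'\gm\gm Y Y'$. Since the pull-back of $D$ factors through the projection $\pi$, this follows from the base-change and projection formulas, notably \cite[Proposition 3.2]{Calmes14b}, together with the standard fact that external products of Milnor-Witt correspondences commute with pull-back along projections from product schemes. Finally, writing the projection $\gm\gm YY'\to YY'$ as $(\gm\gm Y\to Y)\times\mathrm{id}_{Y'}$ and using compatibility of the push-forward with external products for such product morphisms (cf.\ \cite[Example 4.13]{Calmes14b}), I would conclude $\rho_n(\alpha\times f)=\rho_n(\alpha)\times f$ by composing with the projection.

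The main obstacle will be bookkeeping: the line-bundle twists coming from the canonical isomorphism (\ref{eq:orientation}) and the signs arising from skew-commutativity of the intersection product (Remark \ref{rem:commutativity}) need to be tracked carefully through each pull-back, intersection, and push-forward. They should cancel on both sides of the equality because the statement involves no correction factors, but arranging this cleanly requires choosing a consistent ordering of factors before invoking the base-change and projection formulas.
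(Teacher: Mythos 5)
Your proposal is correct and takes essentially the same approach as the paper: express $\alpha\times f$ as an intersection product of pull-backs of $\alpha$ and the graph $\Gamma_f$ on the large product scheme, multiply by $\tildediv(D)$, and conclude by associativity of the intersection product together with compatibility of push-forward with external products. The paper's proof is terser (it simply invokes associativity, leaving the bookkeeping of permutations, twists, and push-forwards implicit), whereas you spell these steps out more explicitly, but the underlying argument is identical.
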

\begin{proof}
Omitting the required permutations, 
the product $\alpha\times f$ is defined by the intersection product of the pull-backs on $\gm X\gm Y X^\prime Y^\prime$ along the corresponding projections of $\alpha$ and the graph $\Gamma_f$. 
To compute $\rho_n(\alpha\times f)$ we multiply (on the left) by $\tildediv D$ (again omitting the permutations). 
On the other hand, 
the correspondence $\rho_n(\alpha)\times f$ is obtained by multiplying $\tildediv D$, $\alpha$, and $\Gamma_f$ along the relevant projections. 
Thus the assertion follows from associativity of the intersection product.
\end{proof}

Recall that $\tc\{1\}$ is the cokernel of the morphism $\tc(\Spec(k))\to \tc(\gm)$ induced by the unit for the multiplicative group scheme $\gm$.

\begin{lem}
\label{lem:hopf}
The twist map 
$$
\sigma:\tc(\gm)\otimes\tc(\gm)\to \tc(\gm)\otimes\tc(\gm)
$$
induces a map 
$$
\sigma:\tc\{1\}\otimes \tc\{1\}\to \tc\{1\}\otimes \tc\{1\},
$$ 
which is $\aone$-homotopic to multiplication by $\epsilon$.
\end{lem}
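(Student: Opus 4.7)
My plan is to construct an explicit $\aone$-homotopy presenting $\sigma - \epsilon\cdot\mathrm{id}$ as a correspondence supported on base-point sections, following the spirit of Voevodsky's argument in \cite[\S 4]{Voevodsky10} while exploiting the quadratic refinement furnished by Lemma~\ref{lem:degree}.

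The first step is to confirm that $\sigma$ descends from $\tc(\gm)\otimes\tc(\gm)$ to $\tc\{1\}\otimes\tc\{1\}$. This is immediate: swapping the two factors preserves the sub-presheaf generated by the two base-point sections $1\colon\Spec(k)\to\gm$. Hence proving the $\aone$-homotopy reduces to producing, for a suitable representative of $\mathrm{id}$ in $\tcor_k(\gm\gm,\gm\gm)$, an interpolating Milnor-Witt correspondence $H$ from $\gm\gm\times\aone$ to $\gm\gm$ whose fibres over $0$ and $1$ recover $\sigma$ and $\epsilon\cdot\mathrm{id}$ respectively, modulo cycles supported on a base-point section.

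I would construct $H$ from the same rational function $g_n=(t_1^{n+1}-1)/(t_1^{n+1}-t_2)$ that appears in Lemma~\ref{lem:degree}, extended to a one-parameter family of principal Cartier divisors on $\gm\gm\times\gm\gm\times\aone$, in analogy with the deformations used in \S\ref{cancellationW} to define the operator $\rho_n$ and with the composition rule studied in Lemma~\ref{lem:composite}. Intersecting this family against the graph of $\mathrm{id}$ (respectively of the swap) on $\gm\gm$ and specialising at the two endpoints of $\aone$, the computation of the resulting correspondences reduces, by the projection formula and the base-change identity \cite[Proposition~3.2]{Calmes14b}, to the push-forward identity $p_\ast\chi(D\cdot 1)=\langle -1\rangle$ of Lemma~\ref{lem:degree}. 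Combined with the skew-commutativity sign of Remark~\ref{rem:commutativity}, this produces exactly the factor $\epsilon=-\langle -1\rangle$.

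I expect the main obstacle to be the careful bookkeeping of twists in the Rost-Schmid complex: at each specialisation one must verify that the trivializations of $\OO(D)$ and of the canonical bundles, in particular the isomorphism~\eqref{eq:orientation}, agree with those implicit in identifying cohomology classes with elements of $\tcor_k$. A secondary but non-trivial point is to check that the spurious boundary terms produced when sliding the Cartier divisor across the special fibres are all supported on a base-point section of $\gm\gm$, so that they are killed upon passing to $\tc\{1\}\otimes\tc\{1\}$. Assembling these ingredients yields the desired $\aone$-homotopy between $\sigma$ and $\epsilon\cdot\mathrm{id}$.
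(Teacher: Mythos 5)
Your opening step (that $\sigma$ descends to $\tc\{1\}\otimes\tc\{1\}$) matches the paper, and your framing of the task---produce an interpolating correspondence $H$ over $\gm\gm\times\aone$ whose fibres at the endpoints recover $\sigma$ and $\epsilon\cdot\mathrm{id}$ modulo cycles supported on base-point sections---is the right general shape. But the core of your proposal does not hold up, and the machinery you reach for is the wrong tool.

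The paper first applies Voevodsky's Lemma~4.8 reduction (the $\aone$-homotopy $g\otimes f\simeq f\otimes g^{-1}$), which converts the swap into the ``inversion'' map $z\mapsto z^{-1}$ on a \emph{single} $\gm$ factor. After that reduction the problem lives in $\tcor_k(\gm,\gm)$, and the actual content is an explicit quadratic homotopy: the polynomial $F=t_2^2-t_2\bigl(u(t_1+t_1^{-1})+2(1-u)\bigr)+1$ interpolates between $(t_2-1)^2$ (at $u=0$, supported on the base point) and $(t_2-t_1^{-1})(t_2-t_1)$ (at $u=1$), and the Milnor-Witt correspondence is built from residues of an element $\alpha=[F_1]\otimes dt_2-\langle 2\rangle[F_2]\otimes dt_2-\langle 2\rangle[t_1-1]\otimes dt_2$ after verifying that the spurious residues at $t_1=\pm 1$ cancel. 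A second auxiliary homotopy driven by $G=t_2^2-2t_2(1-2u)+1$ is then needed to kill the leftover hyperbolic term $\langle 1,-1\rangle\otimes(t_2+1)dt_2$ that appears at $u=1$. None of this is present in your sketch.

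By contrast, your plan to ``extend the rational function $g_n$ to a one-parameter family of Cartier divisors'' is not something that can be carried out as stated. The $g_n$ are functions of the source- and target-$\gm$ coordinates $t_1,t_2$ on $\gm X\gm Y$; they are designed to implement the retraction $\rho_n$ and are not naturally a one-parameter family over $\aone$. More fundamentally, Lemma~\ref{lem:degree} ($p_*\chi(D\cdot 1)=\langle -1\rangle$) and Remark~\ref{rem:commutativity} give you only degree/sign information. They can verify that two correspondences have the same image under some push-forward, but they do not \emph{produce} an interpolating cycle, which is exactly what an $\aone$-homotopy requires. Your sketch also describes intersecting the alleged family ``against the graph of $\mathrm{id}$ (respectively of the swap)'' and specialising at both ends, but a single homotopy must connect two fibres of one and the same correspondence over $\aone$; you cannot obtain $\sigma$ and $\epsilon\cdot\mathrm{id}$ as endpoints by intersecting with two \emph{different} graphs. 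This is a genuine gap: the construction of $H$ is undefined, the reduction to Lemma~\ref{lem:degree} is not justified, and the residue bookkeeping and auxiliary $G$-homotopy --- where all the quadratic content of the statement actually lives --- are entirely absent.
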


\begin{proof}
The claim that $\sigma$ induces a map $\tc\{1\}\otimes \tc\{1\}\to \tc\{1\}\otimes \tc\{1\}$ follows immediately from the definition.
By the proof of \cite[Lemma 4.8]{Voevodsky10}, 
$g\otimes f\colon X\to\gm{}\gm{}$ is $\aone$-homotopic to $f\otimes g^{-1}$ for all $X\in\Sm_{k}$ and $f,g\in \OO(X)^\times$.
We are thus reduced to proving that $\tc\{1\}\to\tc\{1\}$ induced by the map $\gm\to \gm$ sending $z$ to $z^{-1}$  is $\aone$-homotopic to multiplication by $\epsilon$. 
Its graph in $\gm\gm=\Spec(k[t_1^{\pm1},t_2^{\pm1}])$ is given by the prime ideal $(t_2-t_1^{-1})$, 
while the graph of the identity is given by $(t_2-t_1)$. 
We note that their product equals
$$
(t_2-t_1^{-1})(t_2-t_1)=t_2^2-(t_1+t_1^{-1})t_2+1. 
$$
On the other hand, we can consider $(t_2-1)^2$ and the polynomial 
$$
F:=t_2^2-t_2(u(t_1+t_1^{-1})+2(1-u))+1\in k[t_1^{\pm 1},t_2^{\pm 1},u].
$$ 
It is straightforward to check that $V(F)$ defines a closed subset in $\gm \A^1 \gm$ which is finite and surjective over $\gm \A^1$. 
The same properties hold for $V(G)$, 
where $G:=t_2^2-2t_2(1-2u)+1$. 

We define $F_1:=(t_1-t_1^{-1})F$, $F_2:=(t_1+1)G$, and consider the element 
\[
\alpha:=[F_1]\otimes dt_2 -\langle 2\rangle[F_2]\otimes dt_2-\langle 2\rangle[t_1-1]\otimes dt_2 \in \KMW_1(k(u,t_1,t_2),\omega_{\gm})
\] 
together with its image under the residue homomorphism
\[
d:\KMW_1(k(u,t_1,t_2),\omega_{\gm})\to \bigoplus_{x\in (\gm\A^1\gm)^{(1)}} \KMW_0(k(x),\omega_{\gm}).
\] 
of \cite[Remark 3.21]{Morel08}.
The polynomial $F_1$ ramifies at $F$ and at $(t_1-t_1^{-1})=t_1^{-1}(t_1-1)(t_1+1)$. 
The residue of $[F_1]\otimes dt_2$ at $(t_1-1)$ is 
$$
\langle 2\rangle \otimes (t_1-1)dt_2, 
$$
while its residue at $t_1+1$ is  
\[
\langle 2(t_2^2-2t_2(1-2u)+1)\rangle \otimes (t_1+1)dt_2.
\] 
On the other hand, 
the residue of $\langle 2\rangle[F_2]\otimes dt_2$ at $(t_1+1)$ is 
$$
\langle 2(t_2^2-2t_2(1-2u)+1)\rangle \otimes (t_1+1)dt_2.
$$ 
It follows that $\alpha$ is unramified at both $(t_1+1)$ and $(t_1-1)$. 
Hence the residue of $\alpha$ defines a Milnor-Witt correspondence $\gm\A^1 \to \gm$ of the form
\[
d(\alpha)\in \H^1_{V(F)}(\gm\A^1\gm,\KMW_1,\omega_{\gm})\subset \tcor k(\gm\A^1,\gm).
\]
We now compute the restriction of $d(\alpha)$ at $u=0$ and $u=1$, 
i.e., 
we compute the image of $d(\alpha)$ along the morphisms $\tcor k(\gm\A^1,\gm)\to \tcor k(\gm,\gm)$ induced by the inclusions $\Spec(k)\to \A^1$ at $u=0$ and $u=1$.
Its restriction $d(\alpha)(0)$ to $u=0$ is supported on $V(t_2-1)$ and is thus trivial on $\tc\{1\}(\gm)$. 
Its restriction $d(\alpha)(1)$ to $u=1$ takes the form
\[
\langle 1\rangle\otimes (t_2-t_1)dt_2+\langle -1\rangle \otimes (t_2-t_1^{-1})dt_2+ \langle 1,-1\rangle\otimes (t_2+1)dt_2.
\]
It follows that $d(\alpha)(1)$ is $\A^1$-homotopic to $0$. 
To conclude we are reduced to showing that $\langle 1,-1\rangle\otimes (t_2+1)dt_2$ is $\A^1$-homotopic to $0$. 
This is obtained using the polynomial $[G]$ by observing that the restriction to $u=0$ of the residue of $G$ is supported on $V(t_2-1)$, 
while the restriction to $u=1$ is precisely $\langle 1,-1\rangle\otimes (t_2+1)dt_2$.
\end{proof}


\begin{thm}
\label{thm:weakcancellation}
Let $\mathcal F$ be a Milnor-Witt presheaf and let $p:\tc(X)\to \mathcal F$ be an epimorphism of presheaves. 
Suppose 
\[
\phi: \tc\{1\}\otimes \mathcal F\to \tc\{1\}\otimes \tc(Y)
\]
is a morphism of presheaves with Milnor-Witt transfers. 
Then there exists a unique up to $\aone$-homotopy morphism $\psi:\mathcal F\to \tc(Y)$ such that $\phi\cong\mathrm{Id}\otimes \psi$.
\end{thm}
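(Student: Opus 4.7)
The plan is to adapt Voevodsky's proof of weak cancellation \cite[\S4]{Voevodsky10} to the Milnor-Witt setting, with the $\rho_{n}$-construction of this section and the twist-is-$\epsilon$ identity of Lemma \ref{lem:hopf} replacing the corresponding classical inputs.

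First I would reduce to the representable source case. Precomposing $\phi$ with $\mathrm{Id}_{\tc\{1\}}\otimes p$ yields $\tilde\phi\colon \tc\{1\}\otimes \tc(X) \to \tc\{1\}\otimes \tc(Y)$. The splitting $\tc(\gm) = \tc(\Spec(k)) \oplus \tc\{1\}$ induced by the unit section $1\colon \Spec(k) \to \gm$ exhibits $\tc\{1\}\otimes \tc(X)$ as a direct summand of $\tc(\gm X)$, and similarly for the target. Extending $\tilde\phi$ by zero on the complementary summands and invoking the Yoneda lemma, one obtains a finite Milnor-Witt correspondence $\alpha \in \tcor k(\gm X, \gm Y)$. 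For $n \geq N(\alpha)$ I would define $\psi := \rho_{n}(\alpha) \in \tcor k(X, Y)$. Descent of $\psi$ along $p$ to a morphism $\mathcal F \to \tc(Y)$ follows from Lemma \ref{lem:composite}: for $\beta_{1}, \beta_{2} \in \tcor k(Z, X)$ with $p\beta_{1} = p\beta_{2}$ one has $\alpha\circ(1\times \beta_{1}) = \alpha\circ(1\times \beta_{2})$, whence $\psi\circ\beta_{1} = \rho_{n}(\alpha\circ(1\times \beta_{1})) = \rho_{n}(\alpha\circ(1\times \beta_{2})) = \psi\circ\beta_{2}$. Independence of $n$ up to $\aone$-homotopy should follow from a direct affine-line interpolation between $D(g_{n})$ and $D(g_{n+1})$.

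The heart of the argument is the identity $\phi \cong \mathrm{Id}_{\tc\{1\}}\otimes \psi$, which via Yoneda reduces to an $\aone$-homotopy $\alpha \sim 1 \times \psi$ in $\tcor k(\gm X, \gm Y)$. To establish this I would compare the two morphisms $\mathrm{Id}_{\tc\{1\}} \otimes \tilde\phi$ and $\tilde\phi \otimes \mathrm{Id}_{\tc\{1\}}$ from $\tc\{1\} \otimes \tc\{1\} \otimes \tc(X)$ to $\tc\{1\} \otimes \tc\{1\} \otimes \tc(Y)$. Conjugating one by the twist $\sigma$ of the two $\tc\{1\}$ factors on both source and target and invoking Lemma \ref{lem:hopf} twice, these two morphisms agree up to the scalar $\epsilon^{2} = \langle 1 \rangle$. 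Via Yoneda this produces an $\aone$-homotopy $1 \times \alpha \sim \alpha \times 1$ in $\tcor k(\gm\gm X, \gm\gm Y)$ (modulo the appropriate permutation of factors). Applying $\rho_{n}$ to the outer $\gm$-factor then collapses the left-hand side to $\alpha$ by Lemma \ref{lem:minus1} and the right-hand side to $1 \times \rho_{n}(\alpha) = 1 \times \psi$ by Lemma \ref{lem:rightcomposition}, yielding $\alpha \sim 1 \times \psi$ as desired.

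Uniqueness up to $\aone$-homotopy then falls out of the same machinery: if $\mathrm{Id}\otimes \psi$ and $\mathrm{Id}\otimes \psi'$ are $\aone$-homotopic, so are $1\times \psi$ and $1\times \psi'$ in $\tcor k(\gm X, \gm Y)$ via Yoneda, and applying $\rho_{n}$ together with Lemma \ref{lem:minus1} gives $\psi \sim \psi'$. The hard part will be the central comparison of $1\times \alpha$ and $\alpha \times 1$: one must keep track of the twists by the canonical line bundles on $\gm Y$, of the $\langle -1 \rangle$-signs arising from the (skew-)commutativity of intersection products (Remark \ref{rem:commutativity}), and of the explicit $\aone$-homotopies produced by Lemma \ref{lem:hopf}, and verify that the two $\epsilon$-factors coming from the twists on source and target cancel via the identity $\epsilon^{2} = \langle 1 \rangle$.
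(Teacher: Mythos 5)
Your proposal is correct and follows essentially the same approach as the paper: reduce via Yoneda to a finite Milnor--Witt correspondence $\alpha \in \tcor k(\gm X,\gm Y)$, take $\psi := \rho_n(\alpha)$, compare $1\times\alpha$ with $\alpha\times 1$ using Lemma~\ref{lem:hopf} (with the $\epsilon^2 = \langle 1\rangle$ cancellation from twisting on source and target), and collapse via Lemmas~\ref{lem:minus1}, \ref{lem:composite}, \ref{lem:rightcomposition}. The paper phrases the comparison in terms of the permuted morphism $\phi^*$ rather than the tensor factorizations $\mathrm{Id}\otimes\tilde\phi$ and $\tilde\phi\otimes\mathrm{Id}$, but the content is the same.
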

\begin{proof}
Precomposing $\phi$ with the projection $\tc(\gm)\otimes \mathcal F\to \tc\{1\}\otimes \mathcal F$ and postcomposing with the monomorphism $\tc\{1\}\otimes\tc(Y)\to \tc(\gm)\otimes \tc(Y)$, 
we get a morphism $\tilde \phi:\tc(\gm)\otimes \mathcal F\to \tc(\gm)\otimes \tc(Y)$. 
Composing $\tilde\phi$ with $\mathrm{Id}\otimes p$ yields a morphism of presheaves 
$$
\alpha: \tc(\gm)\otimes \tc(X)\to \tc(\gm)\otimes\tc(Y), 
$$ 
i.e., 
a Milnor-Witt correspondence $\alpha\in \tcor k(\gm X,\gm Y)$.
Choose $n\geq N(\alpha)$ such that $D(g_n)$ be defined relative to $\alpha$. 
Using Lemma \ref{lem:composite}, 
we get a morphism of presheaves $\rho_n(\tilde \phi):\mathcal F\to \tc(Y)$. 
If $\phi=\mathrm{Id}\otimes \psi$ for some $\psi:\mathcal F\to \tc(Y)$ we have $\alpha=\mathrm{Id}\otimes\psi p$. 
Lemma \ref{lem:minus1} shows that $\rho_n(\alpha)=\psi p$. 
Thus $\rho_n(\tilde \phi)=\psi$ and it follows that $\psi=\chi_n(\tilde \phi)$. 
This implies the uniqueness part of the theorem as in \cite[Theorem 4.6]{Voevodsky10}.

To conclude, 
we need to observe there is an $\aone$-homotopy $\mathrm{Id}\otimes \rho_n(\tilde \phi)\simeq \phi$. 
Let 
\[
\tilde \phi^*:\mathcal F\otimes \tc(\gm)\to \tc(Y)\otimes \tc(\gm)
\]
be the morphism obtained by permutation of the factors, 
and let 
\[
\phi^*:\mathcal F\otimes \tc\{1\}\to \tc(Y)\otimes \tc\{1\}
\]
be the naturally induced morphism. 
Using Lemma \ref{lem:hopf}, 
we see that $\mathrm{Id}_{\tc\{1\}}\otimes \phi^*$ and $\phi\otimes\mathrm{Id}_{\tc\{1\}}$ are $\A^1$-homotopic. 
It follows, 
using Lemmas \ref{lem:minus1}, \ref{lem:composite} and \ref{lem:rightcomposition}, 
that $\rho_n(\tilde\phi)\otimes \mathrm{Id}_{\tc\{1\}}=\phi^*$. 
That is, 
we have $\phi=\mathrm{Id}_{\tc\{1\}}\otimes \rho_n(\tilde\phi)$.
\end{proof}

Analogous to  \cite[Corollary 4.9]{Voevodsky10} we deduce from Theorem \ref{thm:weakcancellation} the following result for the Suslin complex of Milnor-Witt sheaves defined via the internal Hom object $\uHom$
and the standard cosimplicial scheme.

\begin{cor}\label{cor:cancel}
For any $Y\in\Sm_{k}$, the morphism
\[
\tc (Y)\to \uHom(\tc\{1\},\tc(Y)\otimes \tc\{1\})
\] 
induces for any smooth scheme $X$ a quasi-isomorphism of complexes
\[
\sus{\tc(Y)}(X)\xrightarrow{\sim} \sus{\uHom(\tc\{1\},\tc(Y)\otimes \tc\{1\})}(X).
\]
\end{cor}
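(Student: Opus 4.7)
The plan is to deduce Corollary \ref{cor:cancel} directly from the weak cancellation statement Theorem \ref{thm:weakcancellation} by evaluating at each simplicial level of the Suslin complex. First I would unwind the definitions: at simplicial degree $n$, an element of $\sus{\tc(Y)}(X)$ is a finite Milnor-Witt correspondence in $\tcor k(X\times\Delta^n,Y)$, while, using the defining property of $\uHom$ and the fact that $\tc\{1\}$ is a direct factor of $\tc(\gm)$, an element of $\sus{\uHom(\tc\{1\},\tc(Y)\otimes\tc\{1\})}(X)$ in degree $n$ corresponds to a morphism of Milnor-Witt presheaves
\[
\phi\colon \tc\{1\}\otimes\tc(X\times\Delta^n)\to\tc\{1\}\otimes\tc(Y).
\]
Under this identification, the morphism in the statement sends $\psi\mapsto\mathrm{Id}_{\tc\{1\}}\otimes\psi$ simplicial-degree-wise.

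The main step is then to apply Theorem \ref{thm:weakcancellation} at each simplicial level $n$, taking $\mathcal F:=\tc(X\times\Delta^n)$ and $p:=\mathrm{Id}$. This produces a candidate backward assignment $\phi\mapsto \rho_{N}(\widetilde\phi)$ for any sufficiently large $N=N(\phi)$, together with the assertions that $\phi$ is $\aone$-homotopic to $\mathrm{Id}\otimes\rho_N(\widetilde\phi)$, and, by Lemma \ref{lem:minus1}, that $\rho_N(\mathrm{Id}\otimes\psi)$ is $\aone$-homotopic to $\psi$ itself. Hence at every simplicial degree the natural map in question is a bijection on $\aone$-homotopy classes, and in particular it induces an isomorphism on $H_0$ of the two complexes.

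To promote this degree-wise statement to a quasi-isomorphism of complexes in all degrees, I would invoke the standard Voevodsky-style principle that an $\aone$-homotopy between morphisms of presheaves with Milnor-Witt transfers induces a chain homotopy between the corresponding maps of Suslin complexes (the interval $\aone$ is embedded in the cosimplicial scheme $\Delta^\bullet$ in such a way that an $\aone$-homotopy of presheaves maps to a simplicial homotopy of the Suslin complexes). Applied to the $\aone$-homotopies supplied by Theorem \ref{thm:weakcancellation}, this identifies $\rho_N$ and $\mathrm{Id}_{\tc\{1\}}\otimes -$ as mutually chain-homotopy inverse, yielding the desired quasi-isomorphism.

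The main obstacle is that the integer $N(\phi)$ produced by Theorem \ref{thm:weakcancellation} depends on the cycle $\phi$ and thus on the simplicial degree, so the backward assignments $\rho_N$ do not visibly commute with the simplicial face and degeneracy operators. Following the strategy of \cite[Corollary 4.9]{Voevodsky10}, I would handle this by filtering both Suslin complexes by bounded-support subcomplexes on which $N$ can be chosen uniformly, by verifying that any two sufficiently large $N$ yield $\aone$-homotopic outputs (so the choice becomes harmless up to chain homotopy), and by finally passing to the colimit over the filtration. I expect this verbatim adaptation of Voevodsky's workaround to go through in the Milnor-Witt setting because each of its inputs is now available: weak cancellation, the $\aone$-homotopy invariance of $\rho_N$ in $N$, and the standard Suslin-complex/$\aone$-homotopy formalism.
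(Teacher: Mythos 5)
Your proposal is correct and follows the same route the paper has in mind: the paper gives no written proof and simply declares the corollary "analogous to \cite[Corollary 4.9]{Voevodsky10}" and deduced from Theorem \ref{thm:weakcancellation}, and your write-up is essentially a careful unpacking of exactly that — applying weak cancellation degreewise with $\mathcal F=\tc(X\times\Delta^n)$, using Lemma \ref{lem:minus1} for the retraction, translating $\aone$-homotopies into chain homotopies on $\mathrm{C}_*^{\mathrm{sing}}$, and handling the cycle-dependent bound $N(\phi)$ via Voevodsky's support filtration and colimit argument. (One small remark: Lemma \ref{lem:minus1} gives the equality $\rho_n(\mathrm{Id}\otimes\psi)=\psi$ on the nose, not merely up to $\aone$-homotopy, which if anything simplifies the filtration step.)
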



\section{Zariski vs. Nisnevich hypercohomology}\label{ZvsN}

Throughout this short section we assume that $k$ is an infinite perfect field.
Let us first recall the following result from \cite[Theorem 3.2.9]{Deglise16}.

\begin{thm}
\label{thm:strictly}
Let $\mathcal F$ be an $\A^1$-invariant Milnor-Witt presheaf. 
Then the associated Milnor-Witt sheaf $\tilde a(\mathcal F)$ is strictly $\aone$-invariant. 
Moreover, 
the Zariski sheaf associated with $\mathcal F$ coincides with $\tilde a(\mathcal F)$, 
and for all $i\in \N$ and $x\in\Sm_{k}$ there is a natural isomorphism
\[
\H^i_{\Zar}(X,\tilde a(\mathcal F))
\xrightarrow{\simeq}
\H^i_{\Nis}(X,\tilde a(\mathcal F)).
\]
\end{thm}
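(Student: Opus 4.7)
The plan is to mimic Voevodsky's strategy for proving the analogous statement for ordinary sheaves with transfers, adapted to the setting of Milnor-Witt correspondences and Morel's framework for strictly $\aone$-invariant sheaves with $\KMW$-action. The proof really breaks into three layers: transport of structure under sheafification, a Gersten-type resolution for stalks, and finally the cohomological consequences.

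First I would check that the Nisnevich sheafification $\tilde a(\mathcal F)$ still carries Milnor-Witt transfers and remains $\aone$-invariant. The Milnor-Witt transfer structure descends through Nisnevich sheafification because $\tcor k$-presheaves can be sheafified in a way compatible with the forgetful functor to $\Sm_k$-presheaves; one has to verify that a Nisnevich hypercover can be refined along correspondences, which follows from standard formal arguments once one has a good class of étale neighborhoods. The $\aone$-invariance of the sheafification is then immediate from the $\aone$-invariance of $\mathcal F$ combined with the exactness of Nisnevich sheafification.

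The heart of the argument is to establish a Gersten-type (Rost-Schmid-type) resolution for $\tilde a(\mathcal F)$ on the Nisnevich local ring $\OO_{X,x}^h$ at any $x \in X$. Here one needs two key lemmas in the spirit of Voevodsky's injectivity and surjectivity results: (i) for a smooth local essentially smooth scheme $X$ with generic point $\eta$, the restriction $\tilde a(\mathcal F)(X) \to \tilde a(\mathcal F)(\eta)$ is injective, and (ii) the cokernel is computed by an explicit complex built out of residues along codimension-one points, twisted by the appropriate line bundles $\wedge^i(\mathfrak m_x/\mathfrak m_x^2)^\vee$. Both statements are obtained by applying the Milnor-Witt transfer structure to correspondences arising from Gabber-type geometric presentation lemmas over the infinite perfect base field $k$, using $\aone$-invariance to collapse the resulting homotopies. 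This is where the infinite perfect field hypothesis enters and is, in my view, the step requiring the most care: one must track orientation data and the hyperbolic relation $(2+[-1]\eta)\eta=0$ through the transfer, in contrast with the Chow-theoretic setting of \cite{Voevodsky10}.

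Once the Gersten resolution is available, everything else falls out formally. The resolution is a complex of flasque Zariski sheaves (because its terms are sums of skyscraper sheaves on points), so both $\H^i_{\Zar}(X,\tilde a(\mathcal F))$ and $\H^i_{\Nis}(X,\tilde a(\mathcal F))$ are computed by the same global sections of the Rost-Schmid complex $C_\bullet(X,\tilde a(\mathcal F))$, giving the natural isomorphism $\H^i_{\Zar} \xrightarrow{\simeq} \H^i_{\Nis}$; the Zariski sheafification of $\mathcal F$ also coincides with $\tilde a(\mathcal F)$ since both are the unramified sheaf extracted from the same generic sections. Finally, strict $\aone$-invariance, i.e.\ the isomorphism $\H^i(X,\tilde a(\mathcal F)) \xrightarrow{\simeq} \H^i(X \times \aone,\tilde a(\mathcal F))$, reduces along the Rost-Schmid complex to an $\aone$-invariance statement for each term $C_i(-,\KMW_j(\tilde a(\mathcal F)),\mathcal L)$, which in turn follows from the $\aone$-invariance already established at the level of $\mathcal F$ together with the behavior of $\KMW$ under residues at a generic point of $X \times \aone$ lying over a codimension-$i$ point of $X$.
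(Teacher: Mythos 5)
The paper offers no proof of this theorem; it is stated purely as a recall of \cite[Theorem~3.2.9]{Deglise16}, so the comparison is against the standard proof strategy in the literature rather than against an argument in this paper. Your three-layer plan (transport the Milnor-Witt transfer structure through sheafification, build a Rost-Schmid/Gersten resolution via Gabber presentation lemmas over an infinite perfect field, and read off the cohomological consequences from flasqueness of the terms) is the correct skeleton, in line with Morel's \cite{Morel08} and its Milnor-Witt adaptation in \cite{Deglise16}.

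There is, however, a genuine gap in the first layer. You assert that ``the $\aone$-invariance of the sheafification is then immediate from the $\aone$-invariance of $\mathcal F$ combined with the exactness of Nisnevich sheafification.'' This is not a formal consequence: exactness of $a_{\Nis}$ is a statement about the functor and does not transfer a sectionwise property such as $\aone$-invariance. Concretely, $a_{\Nis}\mathcal F(X)$ is computed from a colimit/equalizer over Nisnevich covers of $X$, and there is no general reason this should agree with the analogous construction over $X\times\aone$; the statement is simply false for presheaves without transfers, and in the transfer-equipped setting it is one of the hardest points of the entire theory. The actual proof runs in essentially the reverse order: one first establishes the injectivity, excision and Gersten-type lemmas for $\aone$-invariant presheaves with transfers, and from those deduces that the Zariski sheafification is $\aone$-invariant, that it coincides with the Nisnevich sheafification, and that the higher cohomology presheaves are again $\aone$-invariant with transfers. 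Your proposal treats the $\aone$-invariance of $\tilde a(\mathcal F)$ as an input to the Gersten machinery when it must be an output. A related (lesser) circularity occurs in your third layer: the claim that $\H^i_{\Zar}$ and $\H^i_{\Nis}$ are ``computed by the same global sections of the Rost-Schmid complex'' presupposes that the Zariski and Nisnevich sheafifications coincide and that the complex is exact and acyclic-termwise in \emph{both} topologies; these need to be established separately rather than assumed.
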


We now derive some consequences of Theorem \ref{thm:strictly} following \cite[\S 13]{Mazza06}.

\begin{prop}\label{prop:preparatory}
Let $f\colon \mathscr C \to \mathscr D$ be a morphism of complexes of Milnor-Witt presheaves. 
Suppose their cohomology presheaves are homotopy invariant and that $\mathscr C (\Spec (F))\to \mathscr D (\Spec (F))$ is a quasi-isomorphism for every finitely generated field extension $F/k$. 
Then the induced morphism on the associated complexes of Zariski sheaves is a quasi-isomorphism. 
\end{prop}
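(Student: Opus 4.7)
The plan is to reduce the statement to a detection criterion on function fields for strictly $\aone$-invariant Milnor-Witt sheaves, and then to invoke the input provided by Theorem \ref{thm:strictly}. Since Zariski sheafification is exact, for any complex $\mathscr E$ of Milnor-Witt presheaves and every $i\in\Z$ there is a natural identification
\[
\mathcal H^i(a_{\Zar}(\mathscr E))\cong a_{\Zar}(\mathcal H^i(\mathscr E)).
\]
Consequently, in order to show that $a_{\Zar}(f)$ is a quasi-isomorphism it suffices to verify that for each $i$ the induced morphism
\[
a_{\Zar}(\mathcal H^i(\mathscr C))\to a_{\Zar}(\mathcal H^i(\mathscr D))
\]
is an isomorphism of Zariski sheaves.

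The homotopy invariance of the cohomology presheaves $\mathcal H^i(\mathscr C)$ and $\mathcal H^i(\mathscr D)$ combined with the assumption that $\mathscr C(\Spec F)\to \mathscr D(\Spec F)$ is a quasi-isomorphism for every finitely generated field extension $F/k$ implies that the induced map
\[
\mathcal H^i(\mathscr C)(\Spec F)\to \mathcal H^i(\mathscr D)(\Spec F)
\]
is bijective for every such $F$ and every $i$. By Theorem \ref{thm:strictly} the Zariski sheafifications $a_{\Zar}(\mathcal H^i(\mathscr C))$ and $a_{\Zar}(\mathcal H^i(\mathscr D))$ coincide with the associated strictly $\aone$-invariant Milnor-Witt sheaves $\tilde a(\mathcal H^i(\mathscr C))$ and $\tilde a(\mathcal H^i(\mathscr D))$, both of which admit Rost--Schmid-type flabby resolutions whose generic-fibre terms are the sections at spectra of finitely generated field extensions of $k$.

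The remaining step, which I expect to be the main obstacle, is the following detection principle: a morphism of strictly $\aone$-invariant Milnor-Witt sheaves that induces an isomorphism on $\Spec F$ for every finitely generated field extension $F/k$ is itself an isomorphism. Granted this principle, the hypothesis upgrades to an isomorphism of the Zariski sheaves in the preceding paragraph, and combining with the first step yields the desired quasi-isomorphism. The detection principle itself should follow from the Gersten-type injectivity built into the Rost--Schmid complex for strictly $\aone$-invariant Milnor-Witt sheaves developed by Morel: the restriction from sections on a smooth local scheme to sections on its generic point is injective, and the generic point of a smooth $k$-scheme is the spectrum of a finitely generated field extension of $k$, so the hypothesized bijection at function fields propagates to stalks at every point of every smooth scheme. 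This is the Milnor-Witt analogue of the Voevodsky argument in \cite[\S 13]{Mazza06}, with the required ingredients supplied by Theorem \ref{thm:strictly} and the framework of \cite{Deglise16}.
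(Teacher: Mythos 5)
Your argument is essentially correct and closely parallels the paper's proof, but there is a small gap in your justification of the key detection principle, which the paper sidesteps by a different bookkeeping device. The paper passes to the mapping cone $\mathscr C_{f}$ of $f$: by the five lemma its cohomology presheaves are homotopy invariant with Milnor-Witt transfers, Theorem~\ref{thm:strictly} makes the associated sheaves strictly $\aone$-invariant, the hypothesis forces them to vanish on every $\Spec(F)$, and Morel's criterion \cite[Theorem 2.11]{Morel08} then gives that they vanish outright. Working with the cone means one only needs the \emph{vanishing} criterion (a strictly $\aone$-invariant sheaf vanishing on all finitely generated field extensions is zero). Your route works directly with the map $\mathscr C\to\mathscr D$ and therefore requires the stronger-looking \emph{isomorphism-detection} criterion: a morphism of strictly $\aone$-invariant Milnor-Witt sheaves inducing isomorphisms on all $\Spec(F)$ is an isomorphism. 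That criterion is true, but the justification you give --- injectivity of restriction to the generic point from the Rost--Schmid/Gersten resolution --- only establishes that the map of sheaves is a monomorphism; it says nothing about surjectivity. To close the gap you should either pass to the mapping cone as the paper does, or use that the category of strictly $\aone$-invariant Milnor-Witt sheaves is abelian, so the kernel and cokernel of your map are themselves strictly $\aone$-invariant, vanish on function fields by hypothesis, and therefore vanish by the same Morel theorem.
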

\begin{proof}
Consider the mapping cone $\mathscr C_{f}$ of $f$. 
The five lemma implies that the cohomology presheaves $\H^i(\mathscr C_{f})$ are homotopy invariant and have Milnor-Witt transfers. 
The associated Nisnevich Milnor-Witt sheaves are strictly $\A^1$-invariant by Theorem \ref{thm:strictly}.
We have $\tilde a(\H^i(\mathscr C_{f}))(\Spec (F))=0$ by assumption. 
By \cite[Theorem 2.11]{Morel08} it follows that $\tilde a(\H^i(\mathscr C_{f}))=0$. 
This finishes the proof by applying Theorem \ref{thm:strictly}.
\end{proof}

By following the proof of \cite[Theorem 13.12]{Mazza06} with Theorem \ref{thm:strictly} and Proposition \ref{prop:preparatory} in lieu of the references in loc. cit., 
we deduce the following result.

\begin{thm}
Suppose $\mathcal F$ is a Milnor-Witt presheaf such that the Nisnevich sheaf $\tilde a(\mathcal F)=0$. 
Then $\tilde a(\sus {\mathcal F})$ is quasi-isomorphic to $0$. 
The same result holds for the complex of Zariski sheaves associated to $\sus {\mathcal F}$. 
\end{thm}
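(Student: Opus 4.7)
The strategy parallels the proof of \cite[Theorem 13.12]{Mazza06}, with the Milnor-Witt analogues Theorem \ref{thm:strictly} and Proposition \ref{prop:preparatory} in place of the classical results on Voevodsky-style presheaves with transfers. I would first reduce to the Zariski statement: the Nisnevich version then follows, because Nisnevich sheafification is exact on abelian presheaves, so the acyclicity of $\tilde a(\sus{\mathcal F})$ is equivalent to the vanishing of every sheafified cohomology presheaf $\tilde a(\mathrm{H}^n(\sus{\mathcal F}))$, and these cohomology presheaves are $\A^1$-invariant by the standard cosimplicial (extra-degeneracy) argument for Suslin complexes. Theorem \ref{thm:strictly} then identifies their Zariski and Nisnevich sheafifications, so the two statements are equivalent.

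With this reduction in hand, Proposition \ref{prop:preparatory} applied to the zero morphism $0 \to \sus{\mathcal F}$ of complexes with homotopy-invariant cohomology reduces the theorem to the single assertion that $\sus{\mathcal F}(\Spec F) = \mathcal F(\Delta^\bullet_F)$ is acyclic for every finitely generated field extension $F/k$. The same reduction arises from Morel's detection principle \cite[Theorem 2.11]{Morel08} for strictly $\A^1$-invariant sheaves, which already powers the proof of Proposition \ref{prop:preparatory}.

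The final step uses the hypothesis $\tilde a(\mathcal F)=0$, and is the main technical obstacle. The Nisnevich site of a Henselian local $k$-algebra $R$ admits only trivial covers up to refinement, so sheafification does not change sections there; hence $\mathcal F(\Spec R)=0$ for every Henselian local $R/k$, and in particular the degree-zero term $\mathcal F(\Delta^0_F)$ vanishes. The higher terms $\mathcal F(\Delta^n_F)=\mathcal F(\A^n_F)$ do \emph{not} vanish individually, so producing a contracting chain homotopy on $\mathcal F(\Delta^\bullet_F)$ requires the simplicial-combinatorial argument of \cite[\S 13]{Mazza06}: every section of $\mathcal F$ over $\A^n_F$ is Nisnevich-locally zero, and the Milnor-Witt transfer structure is used to glue these local vanishings into a simplicial null-homotopy. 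The only structural inputs of that argument are the exactness of Nisnevich sheafification, the compatibility of finite Milnor-Witt correspondences with Nisnevich covers, and the strict-invariance package of Theorem \ref{thm:strictly}, all of which are available in the Milnor-Witt framework; the argument of loc. cit. therefore transfers verbatim and yields the required acyclicity, completing the proof of both the Nisnevich and the Zariski statements.
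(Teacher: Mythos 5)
Your proposal is correct and takes essentially the same approach as the paper: the paper's own proof is a one-line citation to \cite[Theorem 13.12]{Mazza06} with Theorem~\ref{thm:strictly} and Proposition~\ref{prop:preparatory} substituted for the references there, and your reduction (Nisnevich $\Leftrightarrow$ Zariski via exactness of sheafification plus Theorem~\ref{thm:strictly}; then Proposition~\ref{prop:preparatory} applied to $0 \to \sus{\mathcal F}$ to reduce to acyclicity of $\mathcal F(\Delta^{\bullet}_F)$ over fields) is exactly the structure that citation invokes. The final paragraph, where you sketch why $\mathcal F(\Delta^{\bullet}_F)$ is acyclic, is admittedly only a heuristic gloss on the combinatorial core of \cite[\S 13]{Mazza06}, but the paper leaves precisely that step to the literal transcription of loc.\ cit., so there is no substantive divergence.
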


\begin{cor}\label{cor:localzero}
Let $f\colon \mathscr C\to \mathscr D$ be a morphism of bounded above complexes of Milnor-Witt presheaves. 
Suppose $f$ induces a quasi-isomorphism $f:\mathscr C(X)\to \mathscr D(X)$ for all Hensel local schemes $X$. 
Then $\mathrm{Tot}(\sus {\mathscr C})(Y)\to \mathrm{Tot}(\sus {\mathscr D})(Y)$ is a quasi-isomorphism for every regular local ring $Y$. 
\end{cor}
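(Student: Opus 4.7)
The plan is to pass to the mapping cone, apply the preceding theorem to each cohomology presheaf, combine via a hypercohomology spectral sequence to kill the sheafification of $\mathrm{Tot}(\sus{\mathrm{cone}(f)})$, and finally transfer the conclusion from smooth schemes to regular local ones by a Popescu-type continuity argument.

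Set $\mathscr E := \mathrm{cone}(f)$, a bounded above complex of Milnor-Witt presheaves; it suffices to show $\mathrm{Tot}(\sus{\mathscr E})(Y)$ is acyclic for every regular local $Y$ over $k$. By hypothesis $\mathscr E(X)$ is acyclic for every Hensel local $X$, and since Nisnevich stalks are Henselian local rings, the cohomology presheaves satisfy $\tilde a(\mathcal H^i(\mathscr E)) = 0$ for all $i\in\Z$. Apply the preceding theorem to each $\mathcal H^i(\mathscr E)$: the complex $\tilde a(\sus{\mathcal H^i(\mathscr E)})$ is quasi-isomorphic to $0$ as a Nisnevich (equivalently Zariski) complex.

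Since sheafification is exact and the Suslin construction $\sus{-}$ is an exact functor on presheaves, a Cartan--Eilenberg resolution of $\mathscr E$ furnishes a convergent hypercohomology spectral sequence
\[
E_2^{p,q} = \mathcal H^p\bigl(\tilde a(\sus{\mathcal H^q(\mathscr E)})\bigr) \Longrightarrow \mathcal H^{p+q}\bigl(\tilde a(\mathrm{Tot}(\sus{\mathscr E}))\bigr),
\]
convergent because $\mathscr E$ is bounded above. From the preceding paragraph the $E_2$-page vanishes, hence $\tilde a(\mathrm{Tot}(\sus{\mathscr E})) \simeq 0$. The cohomology presheaves of $\mathrm{Tot}(\sus{\mathscr E})$ are $\A^1$-invariant Milnor-Witt presheaves, so Theorem \ref{thm:strictly} identifies $\mathcal H^i(\mathrm{Tot}(\sus{\mathscr E}))(X)$ with the Zariski hypercohomology of its (vanishing) sheafification for every smooth $X/k$; thus $\mathrm{Tot}(\sus{\mathscr E})(X)$ is acyclic for smooth $X$.

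To upgrade from smooth schemes to a regular local $Y$ over the infinite perfect field $k$, apply Popescu's theorem to realize $Y$ as a cofiltered limit of smooth local $k$-algebras $(Y_\lambda)$, and use that $\sus{\mathscr E}$ commutes with filtered colimits in its argument. Then $\mathrm{Tot}(\sus{\mathscr E})(Y) = \mathrm{colim}_\lambda\, \mathrm{Tot}(\sus{\mathscr E})(Y_\lambda)$ is a filtered colimit of acyclic complexes, hence acyclic. The main obstacle is precisely this last continuity step: one must verify that $\sus{-}$ genuinely commutes with cofiltered limits of smooth affine $k$-schemes (a finite-presentation matter at the level of Milnor-Witt correspondences), or alternatively substitute a direct argument via the Rost--Schmid/Gersten resolution for strictly $\A^1$-invariant Milnor-Witt sheaves on regular local $k$-schemes. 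All remaining ingredients are formal analogues of the Mazza--Voevodsky--Weibel proof in the classical motivic setting.
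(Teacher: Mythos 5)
Your outline is the right one and agrees with the paper's intended argument (the paper simply invokes \cite[Corollary 13.14]{Mazza06} \emph{mutatis mutandis}): pass to the cone $\mathscr E$, note that its cohomology presheaves have vanishing Nisnevich sheafification because Nisnevich stalks are Henselian local, apply the preceding theorem to each $\mathcal H^q(\mathscr E)$, and glue via the second spectral sequence of the bounded-above double complex to obtain $\tilde a(\mathrm{Tot}(\sus{\mathscr E}))\simeq 0$ --- and, using the Zariski half of the preceding theorem, $a_{\Zar}(\mathrm{Tot}(\sus{\mathscr E}))\simeq 0$ as well.

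The one place where your write-up misfires is the penultimate deduction. Theorem~\ref{thm:strictly} compares Zariski and Nisnevich cohomology of the \emph{sheafification} of an $\aone$-invariant MW presheaf; it does not identify the presheaf value $\mathcal H^i(\mathrm{Tot}(\sus{\mathscr E}))(X)$ with any sheaf hypercohomology, and the claimed acyclicity of $\mathrm{Tot}(\sus{\mathscr E})(X)$ ``for every smooth $X$'' is both more than what is needed and more than what follows. What one actually needs is only the local case, and there the argument is elementary and does not pass through Theorem~\ref{thm:strictly}: for a local scheme $Y=\Spec(\OO_{X,x})$ the functor $\Gamma(Y,-)$ on Zariski sheaves is exact and Zariski sheafification preserves stalks, so $\mathcal H^n(\mathrm{Tot}(\sus{\mathscr E})(Y))$ is the stalk at $x$ of the $n$-th Zariski cohomology sheaf of $a_{\Zar}\mathrm{Tot}(\sus{\mathscr E})$, which vanishes. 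Finally, you overstate the Popescu step: since the simplices $\Delta^n$ are of finite type and MW correspondences are finitely presented, $\sus{-}$ tautologically commutes with filtered colimits of rings, so the continuity argument is routine; in fact the applications in \S\ref{ZvsN}--\S\ref{etfMWmotives} only ever evaluate at local rings of smooth schemes, which are already essentially smooth over the perfect field $k$, so no Popescu input is strictly required there.
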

\begin{proof}
The proof of \cite[Corollary 13.14]{Mazza06} applies mutatis mutandis. 
\end{proof}

\begin{cor}\label{cor:local}
For every $X\in\Sm_{k}$ and regular local ring Z, the morphism of Milnor-Witt presheaves $\tc (X)\to \tZ(X)$ induces a quasi-isomorphism of complexes 
\[
\sus {\tc (X)}(Z)\xrightarrow{\simeq} \sus{\tZ(X)}(Z).
\]
There are naturally induced isomorphisms  
\[
\H^i_{\Zar}(Y,\sus {\tc (X)})\simeq \H^i_{\Zar}(Y,\sus{\tZ(X)})\simeq \H^i_{\Nis}(Y,\sus{\tZ(X)})
\]
for every $Y\in \Sm_{k}$.
\end{cor}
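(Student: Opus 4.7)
The plan is to derive this corollary from Corollary \ref{cor:localzero} and Theorem \ref{thm:strictly}, in close analogy with \cite[Corollary 13.15]{Mazza06}. First I apply Corollary \ref{cor:localzero} to the morphism of one-term complexes $\tc(X) \to \tZ(X)$. To verify its hypothesis, I must check that this map is an isomorphism on every Hensel local scheme $U$. Since $\tZ(X)$ is the Nisnevich Milnor-Witt sheaf associated to $\tc(X)$ and since Nisnevich sheafification preserves stalks, $\tZ(X)(U)$ agrees with the stalk of $\tc(X)$ at the closed point $u$ of $U$. For $U$ Hensel local, the identity $U \to U$ is cofinal among Nisnevich neighborhoods of $u$ (every such neighborhood admits a section by Hensel's lemma), so this stalk equals $\tc(X)(U)$ and the map is an isomorphism. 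Corollary \ref{cor:localzero} then furnishes the desired quasi-isomorphism $\sus{\tc(X)}(Z) \to \sus{\tZ(X)}(Z)$ for every regular local ring $Z$.

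For the remaining isomorphisms, fix $Y \in \Sm_{k}$. The first, $\H^i_{\Zar}(Y, \sus{\tc(X)}) \simeq \H^i_{\Zar}(Y, \sus{\tZ(X)})$, follows from the previous step applied to the Zariski stalks $\mathcal{O}_{Y,y}$: these are regular local rings, so the induced map on Zariski sheafifications of $\sus{\tc(X)}$ and $\sus{\tZ(X)}$ is a quasi-isomorphism, whence the Zariski hypercohomologies of $Y$ with coefficients in either complex coincide. The second isomorphism, $\H^i_{\Zar}(Y, \sus{\tZ(X)}) \simeq \H^i_{\Nis}(Y, \sus{\tZ(X)})$, follows from Theorem \ref{thm:strictly}: the cohomology presheaves of $\sus{\tZ(X)}$ are $\A^1$-invariant Milnor-Witt presheaves (a standard feature of the Suslin complex construction), so their associated Zariski and Nisnevich sheaves coincide and carry the same cohomology on $Y$. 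A hypercohomology spectral sequence argument then lifts the comparison from cohomology sheaves to the full complex.

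The main subtlety in this plan lies in the first step, specifically in arguing that $\tc(X) \to \tZ(X)$ is an isomorphism on Hensel local schemes. This relies on identifying the associated Nisnevich Milnor-Witt sheaf $\tZ(X)$ with the ordinary Nisnevich sheafification of the underlying presheaf of abelian groups, which in turn uses the compatibility between Milnor-Witt transfers and Nisnevich sheafification established earlier in the paper. Once this compatibility is in hand, the remainder of the argument is a direct transcription of \cite[Corollary 13.15]{Mazza06} into the Milnor-Witt framework, with Corollary \ref{cor:localzero} and Theorem \ref{thm:strictly} serving as substitutes for their classical counterparts.
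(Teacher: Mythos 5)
Your proof is correct and follows essentially the same route as the paper's, which cites Corollary \ref{cor:localzero} for the first comparison and \cite[Corollary 3.2.12]{Deglise16} for the Zariski-to-Nisnevich step. You simply unpack both citations: verifying the Hensel-local hypothesis of Corollary \ref{cor:localzero} by hand (via the agreement of Nisnevich stalks and sections of sheaves on Hensel local schemes) and reproving the second comparison from Theorem \ref{thm:strictly} via the hypercohomology spectral sequence, which is exactly what the cited corollary does.
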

\begin{proof}
The isomorphism 
$$
\H^i_{\Zar}(Y,\sus {\tc (X)})
\simeq 
\H^i_{\Zar}(Y,\sus{\tZ(X)})
$$ 
follows immediately from Corollary \ref{cor:localzero}. 
The second isomorphism follows from \cite[Corollary 3.2.12]{Deglise16}.
\end{proof}

In the same vein, we have the following result.

\begin{cor}
\label{cor:local2}
For every $X\in\Sm_{k}$ and regular local ring $Z$, the morphism of Milnor-Witt presheaves 
$$
\tc (X)\otimes \tc\{1\}\to \tZ(X)\otimes \tZ\{1\}
$$ 
induces a quasi-isomorphism of complexes 
\[
\sus {\tc (X)\otimes \tc\{1\}}(Z)\xrightarrow{\simeq} \sus{\tZ(X)\otimes \tZ\{1\}}(Z).
\]
There are naturally induced isomorphisms
\[
\H^i_{\Zar}(Y,\sus {\tc (X)\otimes \tc\{1\}})\simeq \H^i_{\Zar}(Y,\sus{\tZ(X)\otimes \tZ\{1\}})\simeq \H^i_{\Nis}(Y,\sus{\tZ(X)\otimes \tZ\{1\}})
\]
for every $Y\in\Sm_{k}$.
\end{cor}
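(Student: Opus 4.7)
The plan is to follow exactly the pattern of the proof of Corollary \ref{cor:local}, with $\tc(X)\otimes \tc\{1\}$ and $\tZ(X)\otimes \tZ\{1\}$ in place of $\tc(X)$ and $\tZ(X)$, respectively. The first step is to exhibit the morphism in question as a direct summand of a sheafification map between representables. Since $\tc\{1\}$ is a direct factor of $\tc(\gm)$ and the tensor product of representable Milnor-Witt presheaves satisfies $\tc(X)\otimes\tc(\gm)\simeq \tc(X\times\gm)$, the presheaf $\tc(X)\otimes\tc\{1\}$ is a canonical retract of $\tc(X\times\gm)$. Using the compatibility of the Nisnevich sheafification with the symmetric monoidal structure on Milnor-Witt presheaves, together with \cite[Proposition 1.2.11(2)]{Deglise16}, one analogously obtains that $\tZ(X)\otimes\tZ\{1\}$ is a retract of $\tZ(X\times\gm)$, and under these identifications the map under consideration is the corresponding retract of $\tc(X\times\gm)\to \tZ(X\times\gm)$.

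Next I would check that this sheafification map restricts to an isomorphism on every Hensel local scheme $V$. Sections of a Nisnevich sheaf over such $V$ agree with the Nisnevich stalk at the closed point, and stalks are preserved by sheafification; combined with the continuity of $\tc(X\times\gm)$ along the filtered system defining $V$, this forces $\tc(X\times\gm)(V)\to \tZ(X\times\gm)(V)$ to be bijective. Taking direct summands, the same holds for the map $\tc(X)\otimes\tc\{1\}\to \tZ(X)\otimes\tZ\{1\}$ evaluated on $V$.

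With this pointwise isomorphism in hand, Corollary \ref{cor:localzero} applied to the two presheaves, regarded as complexes concentrated in degree zero, immediately yields the desired quasi-isomorphism $\sus{\tc(X)\otimes\tc\{1\}}(Z)\xrightarrow{\simeq}\sus{\tZ(X)\otimes\tZ\{1\}}(Z)$ for every regular local ring $Z$. The first of the two asserted hypercohomology isomorphisms follows by Zariski sheafification, while the second isomorphism $\H^i_{\Zar}(Y,\sus{\tZ(X)\otimes\tZ\{1\}})\simeq \H^i_{\Nis}(Y,\sus{\tZ(X)\otimes\tZ\{1\}})$ is an application of \cite[Corollary 3.2.12]{Deglise16}, exactly as at the end of the proof of Corollary \ref{cor:local}.

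The main obstacle I anticipate is the identification carried out in the first step, namely that $\tZ(X)\otimes\tZ\{1\}$ is a direct summand of $\tZ(X\times\gm)$. This rests on two compatibilities which, while standard, deserve to be invoked explicitly: that Nisnevich sheafification of Milnor-Witt presheaves is strong symmetric monoidal, and that the splitting $\tc(\gm)\simeq \tc(\Spec(k))\oplus \tc\{1\}$ persists after sheafification. Given the failure of $\tc(X)$ to be a Nisnevich sheaf in general (\cite[Example 5.12]{Calmes14b}), one must be somewhat careful here, but the remaining steps are then a faithful transcription of the argument for Corollary \ref{cor:local}.
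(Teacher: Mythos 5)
Your proposal is correct and follows essentially the same approach as the paper. The paper's proof of Corollary \ref{cor:local2} is a single sentence — it asserts that $\tc(X)\otimes\tc\{1\}\to \tZ(X)\otimes\tZ\{1\}$ is a Nisnevich-local isomorphism and then declares that the proof of Corollary \ref{cor:local} applies verbatim — and your argument is an expansion of precisely this observation: you identify the map as a retract of the sheafification map $\tc(X\times\gm)\to\tZ(X\times\gm)$ via the splitting of $\tc(\gm)$ and the strong monoidality of Nisnevich sheafification, conclude that it is bijective on Hensel local schemes, and then feed this into Corollary \ref{cor:localzero} and \cite[Corollary 3.2.12]{Deglise16} exactly as in the proof of Corollary \ref{cor:local}.
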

\begin{proof}
In the Nisnevich topology, 
$\tc (X)\otimes \tc\{1\}\to \tZ(X)\otimes \tZ\{1\}$ is locally an isomorphism and the proof of the previous corollary applies verbatim.
\end{proof}


\section{The embedding theorem for Milnor-Witt motives}\label{etfMWmotives}

\begin{thm}
\label{thm:main}
Let $k$ be an infinite perfect field. 
Then for all complexes $\mathscr C$ and $\mathscr D$ of Milnor-Witt sheaves, 
the morphism
\[
\Hom_{\DMt}(\mathscr C,\mathscr D)\to \Hom_{\DMt}(\mathscr C(1),\mathscr D(1))
\]
obtained by tensoring with the Tate object $\tZ(1)$ is an isomorphism.
\end{thm}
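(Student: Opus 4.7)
The plan is to reduce Theorem \ref{thm:main} to the representable case and then bootstrap from the presheaf-level cancellation Theorem \ref{thm:weakcancellation} (equivalently Corollary \ref{cor:cancel}) via the comparison results of Section \ref{ZvsN}. First, using that $\DMt$ is generated as a triangulated category with small sums by the objects $\tZ(X)$ for $X\in\Sm_{k}$, and that the morphism in \eqref{cancellationisomorphism} is natural and exact in both variables, it is enough to treat the case $\mathscr C = \tZ(X)$ and $\mathscr D = \tZ(Y)$ with $X,Y\in\Sm_{k}$. Invoking the $\otimes$--$\uHom$ adjunction and the identification of $\tZ(1)$ with a shift of $\tZ\{1\}$ from \cite[\S3.2]{Deglise16}, the assertion then reduces to showing that the unit morphism
\[
\tZ(Y) \longrightarrow \uHom(\tZ\{1\}, \tZ(Y)\otimes \tZ\{1\})
\]
becomes an isomorphism in $\DMt$.

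Next I would identify morphisms out of $\tZ(X)$ in $\DMt$ with Nisnevich hypercohomology of the Suslin complex: for any complex $\mathscr E$ of Milnor-Witt sheaves one has $\Hom_{\DMt}(\tZ(X),\mathscr E) \simeq \H^0_{\Nis}(X, \sus{\mathscr E})$. Corollary \ref{cor:cancel} provides, for every $X\in\Sm_{k}$, a quasi-isomorphism
\[
\sus{\tc(Y)}(X) \xrightarrow{\sim} \sus{\uHom(\tc\{1\},\tc(Y)\otimes \tc\{1\})}(X)
\]
at the level of Milnor-Witt presheaves. Applying Corollaries \ref{cor:local} and \ref{cor:local2} of Section \ref{ZvsN} then identifies these presheaf Suslin complexes with the Zariski, and hence also the Nisnevich, hypercohomology of the associated Milnor-Witt sheaves $\tZ(Y)$ and $\tZ(Y)\otimes \tZ\{1\}$. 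Chaining these isomorphisms yields the desired equivalence in $\DMt$ in the representable case.

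Finally, to extend from $\mathscr D = \tZ(Y)$ to an arbitrary complex of Milnor-Witt sheaves, I would invoke a standard resolution argument: any bounded-above such complex admits a resolution by coproducts of shifts of representables, and both $-\otimes \tZ(1)$ and $\uHom(\tZ(1),-)$ preserve cones and small coproducts in $\DMt$; an additional filtered colimit and truncation argument handles the unbounded case. The main obstacle is the passage from presheaves to Nisnevich sheaves: the tensor product $\tc(Y)\otimes \tc\{1\}$ is not a Nisnevich sheaf in general (as already noted in the warning following the definition of $\tc(X)$), so the quasi-isomorphism provided by Corollary \ref{cor:cancel} cannot be directly compared to its sheafified counterpart $\tZ(Y)\otimes \tZ\{1\}$ without first invoking the strict $\A^1$-invariance of associated sheaves (Theorem \ref{thm:strictly}) and the ensuing coincidence of Zariski and Nisnevich hypercohomology. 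This is precisely the role played by Corollary \ref{cor:local2}, which lets one upgrade the presheaf-level cancellation into the $\A^1$-Nisnevich local equivalence of sheaves required for the isomorphism in $\DMt$.
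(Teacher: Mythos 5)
Your proposal is correct and follows essentially the same route as the paper: reduce via the $\otimes$--$\uHom$ adjunction and compactness of $\tZ\{1\}$ to the representable case $\mathscr D=\tZ(Y)$, express Hom-groups in $\DMt$ as Nisnevich hypercohomology of Suslin complexes, and then combine Corollary \ref{cor:cancel} (presheaf cancellation) with Corollaries \ref{cor:local} and \ref{cor:local2} (Zariski vs.\ Nisnevich comparison) to transfer the quasi-isomorphism from presheaves to Milnor-Witt sheaves. You also correctly identify the sheafification subtlety for $\tc(Y)\otimes\tc\{1\}$ as the key point handled by Corollary \ref{cor:local2}.
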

\begin{proof}
Obviously,
one can replace the Tate object $\tZ(1)$ by the $\gm$-twist $\tZ\{1\}$.
Using the internal Hom functor, one is reduced to proving there is a canonical isomorphism 
\[
\mathscr D \rightarrow \derR \uHom(\tZ\{1\},\mathscr D\{1\})
\]
for every Milnor-Witt motivic complex $\mathscr D$.
Since $\tZ\{1\}$ is a compact object in $\DMt$ by \cite[Proposition 3.2.21]{Deglise16} and $\mathscr D$ is a homotopy colimit of complexes obtained by suspensions of sheaves $\tZ(Y)$ for some smooth scheme $Y$, 
one is reduced to considering the case where $\mathscr D=\tZ(Y)$.
 
For every $n\in \Z$ and $X\in\Sm_{k}$, we have 
\[
\Hom_{\DMt}(\tZ(X),\tZ(Y)[n])=\H^n_{\Nis}(X,\sus{\tZ(Y)})
\]
by \cite[Corollaries 3.1.8, 3.2.14]{Deglise16}, while 
\[
\Hom_{\DMt}(\tZ(X),\derR \uHom(\tZ\{1\},\tZ(Y)\{1\})[n])=\Hom_{\DMt}(\tZ(X)\{1\},\tZ(Y)\{1\}[n])
\]
is the cokernel of the morphism
\[
\H^n_{\Nis}(X,\sus{\tZ(Y)\{1\}})\to \H^n_{\Nis}(X \gm,\sus{\tZ(Y)\{1\}})
\]
induced by the projection $X \gm\to X$. 
In view of Corollaries \ref{cor:local} and \ref{cor:local2}, we are reduced to proving that the tensor product by $\tZ\{1\}$ induces an isomorphism in Zariski hypercohomology. 
This follows from Corollary \ref{cor:cancel}.
\end{proof}

The category of Milnor-Witt motives $\DM$ is obtained from $\DMt$ by $\otimes$-inversion of the Tate object in the context of the stable model category structure on $\tZ(1)$-spectra \cite[\S 3.3]{Deglise16}.
By construction there is a "$\tZ(1)$-suspension" functor relating these two categories (\cite[Proposition 3.3.7]{Deglise16}).
From Theorem \ref{thm:main} we immediately deduce the analogue for Milnor-Witt motives of Voevodsky's embedding theorem.
\begin{cor}
\label{cor:embedding}
Let $k$ be an infinite perfect field.
There is a fully faithful suspension functor 
$$
\DMt
\to
\DM.
$$
\end{cor}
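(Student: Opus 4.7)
The plan is to unwind the construction of $\DM$ as a stabilization of $\DMt$ with respect to $\tZ(1)$ and reduce the full faithfulness of $\Sigma^\infty \colon \DMt \to \DM$ to the cancellation isomorphism of Theorem \ref{thm:main}. Concretely, for any two objects $\mathscr{C}, \mathscr{D}$ in $\DMt$, the suspension functor induces a comparison map
\[
\Hom_{\DMt}(\mathscr{C},\mathscr{D}) \longrightarrow \Hom_{\DM}(\Sigma^\infty\mathscr{C},\Sigma^\infty\mathscr{D}),
\]
and I need to show this is a bijection.

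First I would recall from \cite[\S 3.3]{Deglise16} that the model category of $\tZ(1)$-spectra is constructed so that the hom set in $\DM$ between suspension spectra can be computed, after passing to fibrant replacements, as the sequential colimit
\[
\Hom_{\DM}(\Sigma^\infty\mathscr{C},\Sigma^\infty\mathscr{D}) \;=\; \mathrm{colim}_n\, \Hom_{\DMt}(\mathscr{C}(n),\mathscr{D}(n)),
\]
where the transition maps are given by tensoring with $\tZ(1)$. This is the standard description of maps between suspension spectra in any stable model category built by $\otimes$-inversion of a cofibrant object; the only thing to be verified is that $\tZ(1)$ is suitably well-behaved (symmetric, cofibrant), which is established in loc. cit.

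Next, Theorem \ref{thm:main} asserts that each transition map in this colimit,
\[
\Hom_{\DMt}(\mathscr{C}(n),\mathscr{D}(n)) \xrightarrow{\;-\otimes \tZ(1)\;} \Hom_{\DMt}(\mathscr{C}(n+1),\mathscr{D}(n+1)),
\]
is an isomorphism. Hence the colimit collapses to its first term $\Hom_{\DMt}(\mathscr{C},\mathscr{D})$, and the comparison map above is a bijection. This shows $\Sigma^\infty$ is fully faithful.

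The main subtlety is not conceptual but bookkeeping: one must ensure that the formula expressing $\Hom_{\DM}$ as a colimit is valid for arbitrary (not necessarily compact) complexes $\mathscr{C}, \mathscr{D}$ of Milnor-Witt sheaves, and that Theorem \ref{thm:main} applies to these same complexes. The former is handled by the levelwise model structure on spectra together with the explicit form of fibrant objects (which are $\Omega_{\tZ(1)}$-spectra precisely when cancellation holds), while the latter is exactly the content of Theorem \ref{thm:main}, which is stated for arbitrary complexes of Milnor-Witt sheaves. Once both ingredients are in hand, the conclusion is immediate.
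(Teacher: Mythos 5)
Your proposal is correct and takes the same route as the paper, which treats the corollary as an immediate consequence of Theorem \ref{thm:main} via \cite[Proposition 3.3.7]{Deglise16} without spelling out the details you supply. One small clarification worth making on your bookkeeping point: the colimit formula $\Hom_{\DM}(\Sigma^\infty\mathscr C,\Sigma^\infty\mathscr D)\cong\mathrm{colim}_n\Hom_{\DMt}(\mathscr C(n),\mathscr D(n))$ is only automatic when $\mathscr C$ is compact. The cleanest way to handle arbitrary $\mathscr C$ is the observation you make in your final sentence, namely that Theorem \ref{thm:main} forces the suspension spectrum $\Sigma^\infty\mathscr D$ (after levelwise fibrant replacement) to already be an $\Omega_{\tZ(1)}$-spectrum, hence stably fibrant; then maps out of any suspension spectrum are computed levelwise at degree $0$, giving $\Hom_{\DM}(\Sigma^\infty\mathscr C,\Sigma^\infty\mathscr D)\cong\Hom_{\DMt}(\mathscr C,\mathscr D)$ directly, with no compactness assumption and no colimit needed.
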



\section{Examples of $\aone$-homotopies}
\label{examples}

The purpose of this section is to perform a number of computations which will be useful both in \cite{Calmes14c} and \cite{Deglise16}. 
Let us first briefly recall the definition of the first motivic Hopf map in the context of Milnor-Witt correspondences. 
The computation in \cite[Lemma 3.6]{Calmes14b} shows that 
$$
\tcor k(\gm,\Spec (k))=\KMW_0(\gm)=\KMW_0(k)\oplus \KMW_{-1}(k)\cdot [t].
$$ 
Here,
the naturally defined class $[t]\in \KMW_{1}(k(t))$ is an element of the subgroup $\KMW_{1}(\gm)$ since it has trivial residues at all closed points of $\gm$.
Thus we have the element 
$$
\eta[t]\in\tcor k(\gm,\Spec (k))=\KMW_{0}(\gm).
$$
Under pull-back with the $k$-rational point $1\colon \Spec(k)\to\gm$ the element $[t]\in\KMW_{1}(\gm)$ goes to $[1]=0\in \KMW_{1}(k)$.
Thus $\eta[t]$ pulls back trivially to $\KMW_{0}(k)$, 
and defines a morphism of presheaves with MW-transfers $s(\eta)\in \mathrm{Hom}_{}(\tc \{1\},\tc(\Spec (k)))$. Now, $\tc(\Spec (k))=\KMW_0$ is a sheaf with MW-transfers, and it follows from \cite[Proposition 1.2.11]{Deglise16} that this morphism induces a morphism of sheaves with MW-transfers $\tZ\{1\}\to \tZ$ that we still denote by $s(\eta)$.

For any (essentially) smooth scheme $X\in\Sm_{k}$ the bigraded Milnor-Witt motivic cohomology group $\H_{\MW}^{n,i}(X,\Z)$ is defined using the complex $\tZ\{i\}$ \cite[Definition 3.3.6]{Deglise16}.
There is a graded ring structure on the Milnor-Witt motivic cohomology groups satisfying the commutativity rule in \cite[Theorem 3.4.3]{Deglise16}. 
Moreover, 
by \cite[\S 4.2.1]{Deglise16}, 
every unit $a\in F^{\times}$ (where $F$ is a finitely generated field exension of $k$) gives rise to an element of $\tcor k(\Spec (F),\gm)$ and also a Milnor-Witt motivic cohomology class $s(a)\in \H^{1,1}_{\mathrm{MW}}(F,\Z)$. Next, it follows from \cite[proof of Proposition 4.1.2]{Deglise16} that $s(\eta)$ yields a well-defined class in $\H_{\MW}^{-1,-1}(k,\Z)$ and thus a class in $\H_{\MW}^{-1,-1}(F,\Z)$ by pull-back. The definitions of $s(\eta)$ and $s(a)$ apply more generally to (essentially) smooth $k$-schemes \cite[\S4]{Deglise16}.

In analogy with motivic cohomology and Milnor $K$-theory, 
the integrally graded diagonal part of Milnor-Witt motivic cohomology can be identified with Milnor-Witt $K$-theory as sheaves of graded rings \cite[Theorem 4.2.3]{Deglise16}.
Using cancellation for Milnor-Witt correspondences we show results which are absolutely crucial for establishing the mentioned identification.

\begin{lem}
\label{lem:example1}
For every unit $a\in \OO(X)^\times$ we have 
$$
1+s(a)s(\eta)=\langle a\rangle\in \H^{0,0}_{\mathrm{MW}}(X,\Z)=\KMW_0(X).
$$
\end{lem}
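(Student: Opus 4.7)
The plan is to unwind the definitions of $s(a)$ and $s(\eta)$ as Milnor-Witt correspondences, identify their product directly with $\eta[a] \in \K^{MW}_0(X)$, and then invoke the Grothendieck-Witt ring isomorphism recalled in the notation section.

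Recall from the text that $s(\eta) \colon \tZ\{1\} \to \tZ$ is induced by $\eta[t] \in \K^{MW}_0(\gm) = \tcor k(\gm,\Spec(k))$, which factors through the direct summand $\tZ\{1\}$ of $\tZ(\gm) = \tZ \oplus \tZ\{1\}$ since $[t]$ pulls back to $[1] = 0$ along the unit $1\colon \Spec(k) \to \gm$. For a unit $a \in \OO(X)^\times$, the class $s(a) \in \H^{1,1}_{\mathrm{MW}}(X,\Z)$ is induced by the graph of the morphism $a\colon X \to \gm$, viewed as an element of $\tcor k(X,\gm)$, composed with the projection onto $\tZ\{1\}$.

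Consequently, the product $s(a) \cdot s(\eta)$ in $\H^{0,0}_{\mathrm{MW}}(X,\Z) = \K^{MW}_0(X)$ is represented by the composition
\[
\tZ(X) \xrightarrow{\tilde\gamma(a)} \tZ(\gm) \xrightarrow{\eta[t]} \tZ,
\]
which at the level of $\tcor k$ is exactly the pull-back of the correspondence $\eta[t]$ along the morphism of schemes $a \colon X \to \gm$. Since pull-back is a morphism of graded $\K^{MW}_{*}$-modules and sends $[t] \in \K^{MW}_1(\gm)$ to $[a] \in \K^{MW}_1(X)$, the composite equals $\eta[a]$. We then conclude
\[
1 + s(a)\,s(\eta) = 1 + \eta[a] = \langle a\rangle,
\]
the second equality being the Grothendieck-Witt ring isomorphism $\GW \xrightarrow{\simeq} \K^{MW}_0$ recalled in the introduction.

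The main obstacle is the first step: namely, justifying rigorously that the ring product in $\H^{*,*}_{\mathrm{MW}}$ coincides, for these two classes, with the naive composition of correspondences described above, with no spurious sign or $\epsilon$-twist introduced by the graded-commutativity rule of \cite[Theorem 3.4.3]{Deglise16}. Since $s(\eta)$ sits in bidegree $(-1,-1)$ and $s(a)$ in bidegree $(1,1)$, the bidegree bookkeeping is tight but straightforward; one needs only to verify that the chosen order of composition matches the definition of the cup product in \cite[\S4.2.1]{Deglise16}, which is routine once the conventions are fixed.
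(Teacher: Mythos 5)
Your proposal takes a genuinely different route from the paper's, and the step you flag as ``the main obstacle'' is not a loose end to be tidied up but is in fact the entire content of the paper's argument.

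The paper does \emph{not} identify $s(a)s(\eta)$ with the naive composition $\tZ(X)\xrightarrow{s(a)}\tZ\{1\}\xrightarrow{s(\eta)}\tZ$. Instead, after reducing to the case $X=\Spec(F)$ by injectivity of $\KMW_0(X)\to\KMW_0(k(X))$, it represents the product by the \emph{exterior} product $\alpha = p_2^*s(a)\cdot p_3^*s(\eta)\in\tcor k(\gm F,\gm)$ and then runs this through the cancellation apparatus: it computes $\rho_n(\alpha)$ for an explicit Cartier divisor $D(g_0)$, using Remark \ref{rem:commutativity} and the base-change and projection formulas, gets $\tildediv(D')$ explicitly for $g'=(t-1)/(t-a)$, pushes forward, and finds $\rho_n(\alpha)=\eta[a]$. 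Only then does Theorem \ref{thm:weakcancellation} allow one to conclude that $\alpha$ is $\aone$-homotopic to the suspension of $\eta[a]=\langle a\rangle-1$. The point is precisely that Theorem \ref{thm:weakcancellation} is what legitimizes ``de-suspending'' the product back to bidegree $(0,0)$; your formal manipulation with $\H^{-1,-1}$ and compositions implicitly assumes a suspension isomorphism that is established only by the cancellation machinery, and the potential $\epsilon$-twist you wave off (Lemma \ref{lem:hopf} shows the twist on $\tc\{1\}\otimes\tc\{1\}$ is $\aone$-homotopic to $\epsilon$, not to the identity) is exactly the kind of subtlety the explicit divisor computation controls.

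That said, your intuition that pull-back along $a$ sends $[t]$ to $[a]$ and hence the answer must be $\eta[a]$ is correct — this is precisely what drops out of the paper's push-forward computation. Moreover, your argument is more uniform (no reduction to fields, no exclusion of $a=1$). But in the context of \S\ref{examples}, whose stated purpose is to \emph{verify} such identities using the cancellation theorem so that they can feed into the proof in \cite{Deglise16} that the diagonal of $\H^{*,*}_{\MW}$ is $\KMW_*$, the slick argument via an abstractly defined cup product in $\H^{-1,-1}$ risks assuming the very ring-structure identifications that these computations are meant to underwrite. If you want to pursue your route, the ``routine'' step needs to be written out as a separate lemma, with an explicit check that no $\epsilon$ or sign appears under the conventions of \cite[Theorem 3.4.3]{Deglise16}; as it stands it is a genuine gap, not a matter of bookkeeping.
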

\begin{proof}
For the function field $k(X)$ of $X$, 
the naturally induced map
$$
\H^{0,0}_{\mathrm{MW}}(X,\Z)
=
\KMW_0(X)
\to 
\KMW_0(k(X))
=
\H^{0,0}_{\mathrm{MW}}(k(X),\Z)
$$ 
is injective. 
We are thus reduced to proving the result for $X=\Spec(F)$, 
where $F$ is a finitely generated field extension of the base field $k$. 
Since this claim is obvious when $a=1$, we may assume that $a\neq 1$.

Consider the following diagram:
\[
\xymatrix{
\Spec(F) \gm \ar[r]^-{\Gamma_a\times 1}\ar[d]_-{p_1} & \Spec(F)\gm\gm\ar[r]^-{p_3}\ar[d]^-{p_2} & \gm \\
\Spec(F)\ar[r]^-{\Gamma_a} & \Spec(F) \gm & 
}
\]
Here,
$p_1,p_2$ and $p_3$ are the projections on the respective factors and $\Gamma_a$ is the graph of the morphism $a:\Spec(F)\to \gm$.
By construction $s(a)$ is the image of the cycle 
\[
(\Gamma_a)_{*}(\langle 1\rangle)\in \H ^1_{(t-a)}(F[t^{\pm 1}],\KMW_1,\omega_{F[t^{\pm 1}]/F})\subset \tcor k(F,\gm)
\] 
under the composite $\tcor k(F,\gm)\to \tc\{1\}(F)\to \tZ\{1\}(F)\to \H_{\MW}^{1,1}(F,\Z)$.
On the other hand, 
$s(\eta)$ corresponds to $\eta[t]\in \KMW_0(\gm)$ and the product is represented by the exterior product $\alpha$ of correspondences between $s(a)$ and $s(\eta)$.
The relevant Cartier divisor for the procedure described in \S\ref{cancellationW} is defined by the rational function
\[
g_n:=(t_1^{n+1}-1)/(t_1^{n+1}-t_2).
\]
In this case we may choose $N(\alpha)=1$. 
Since $\alpha:=p_2^*s(a)\cdot p_3^*s(\eta)$ and the intersection product is associative, setting $D:=D(g_0)$ we find that 
$$
D\cdot \alpha=(D\cdot p_2^*s(a))\cdot p_3^*s(\eta).
$$
Moreover,
using Remark \ref{rem:commutativity}, the base change and projection formulas, 
we get 
\[
D\cdot p_2^*s(a)=\langle -1\rangle p_2^*s(a)\cdot D= (\Gamma_a\times 1)_*(\langle -1\rangle)\cdot D=(\Gamma_a\times 1)_*(\langle -1\rangle(\Gamma_a\times 1)^*D).
\]
Now $D^\prime:=(\Gamma_a\times 1)^*D$ is the Cartier divisor on $\Spec (F) \gm$ defined by the rational function 
$$
g^\prime:=(t-1)/(t-a).
$$ 
Its divisor is of the form 
\[
\tildediv({D^\prime})=\langle 1-a\rangle\otimes (t-1)-\langle 1-a\rangle \otimes (t-a),
\]
and we have 
\[
(\Gamma_a\times 1)_*(\langle -1\rangle\tildediv({D^\prime}))=\langle a-1\rangle\otimes (t_1-1)\wedge (t_2-a)-\langle a-1\rangle \otimes (t_1-a)\wedge (t_2-a).
\]
Next, the projection formula yields
\[
(\Gamma_a\times 1)_*(\langle -1\rangle\tildediv({D^\prime}))\cdot p_3^*s(\eta)=(\Gamma_a\times 1)_*(\langle -1\rangle\tildediv({D^\prime})\cdot (\Gamma_a\times 1)^*p_3^*s(\eta))
\]
and $(\Gamma_a\times 1)^*p_3^*s(\eta)$ is just the pull-back of $s(\eta)$ along the projection $\Spec(F)\gm\to \gm$. Thus $D\cdot \alpha$ is the push-forward along $\Gamma_a\times 1$ of 
\[
\langle -1\rangle\tildediv({D^\prime})\cdot (\Gamma_a\times 1)^*p_3^*s(\eta)=\eta[1]\langle a-1\rangle\otimes (t_1-1)\wedge (t_2-a)-\eta[a]\langle a-1\rangle \otimes (t_1-a)\wedge (t_2-a).
\]
Since $[1]=0$ and $\eta[a]\langle a-1\rangle=\langle -1\rangle \eta[a]\langle 1-a\rangle=\langle -1\rangle\eta[a](1+\eta[1-a])=\langle -1\rangle\eta[a]$, 
we deduce that  
$$
\langle -1\rangle\tildediv({D^\prime})\cdot (\Gamma_a\times 1)^*p_3^*s(\eta)
=
-\langle -1\rangle\eta[a]
=
\eta[a].
$$
Pushing forward to $\Spec F$, we obtain $\rho_n(\alpha)=\eta[a]$. According to Theorem \ref{thm:weakcancellation}, it follows that $\alpha$ is $\A^1$-homotopic to the suspension of $\eta[a]=\langle a\rangle -1$.
\end{proof}

Let $\mu:\gm \gm\to \gm$ denote the multiplication map, and let $p_i:\gm \gm\to \gm$ for $i=1,2$ denote the projection map. 
We consider the corresponding graphs $\Gamma_\mu$, $\Gamma_i$, and their associated Milnor-Witt correspondences $\tilde\gamma_\mu$, $\tilde\gamma_i$. 
By construction we have $(\Gamma_\mu)_*(\langle 1\rangle)=\tilde\gamma_\mu$ and $(\Gamma_i)_*(\langle 1\rangle)=\tilde\gamma_i$. 
One checks that 
$$
\tilde\gamma_\mu-\tilde\gamma_1-\tilde\gamma_2\in \tcor k(\gm\gm,\gm)
$$ 
induces a morphism of Milnor-Witt presheaves $\alpha:\tc\{1\}\otimes \tc\{1\}\to \tc\{1\}$. 

\begin{lem}
\label{example2}
The morphism of Milnor-Witt presheaves $\alpha$ is $\A^1$-homotopic to the suspension of 
\[
s(\eta):\tc\{1\}\to \tc(\Spec (k)).
\]
\end{lem}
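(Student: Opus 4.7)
The plan is to follow the template of Lemma \ref{lem:example1}: apply Theorem \ref{thm:weakcancellation} to reduce the statement to an explicit identification inside $\KMW_0(\gm) = \tcor k(\gm,\Spec(k))$, and then verify this identification by a direct Rost--Schmid computation.

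Taking $\mathcal F = \tc\{1\}$ and the canonical epimorphism $p\colon \tc(\gm)\to \tc\{1\}$, one lifts $\alpha$ to the Milnor-Witt correspondence $\tilde\alpha := \tilde\gamma_\mu - \tilde\gamma_1 - \tilde\gamma_2 \in \tcor k(\gm\gm,\gm)$. The suspension of $s(\eta)$ is the morphism $\mathrm{Id}_{\tc\{1\}}\otimes s(\eta)\colon \tc\{1\}\otimes \tc\{1\}\to \tc\{1\}\otimes \tc(\Spec(k))=\tc\{1\}$, so Theorem \ref{thm:weakcancellation} reduces the lemma to showing that $\rho_n(\tilde\alpha)$ coincides, up to $\aone$-homotopy, with the element $\eta[t]\in \KMW_0(\gm)$ that represents $s(\eta)$.

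To compute $\rho_n(\tilde\alpha)$ I would choose $n$ large enough that the Cartier divisor $D:=D(g_n)$ on $\gm\gm\gm$ with $g_n=(t_1^{n+1}-1)/(t_1^{n+1}-t_3)$ is defined relative to each of the three correspondences $\tilde\gamma_\mu$, $\tilde\gamma_1$ and $\tilde\gamma_2$. Using Remark \ref{rem:commutativity} together with the base change and projection formulas of \cite{Calmes14b}, each intersection $D\cdot \tilde\gamma_\ast$ equals, up to a sign of the form $\langle -1\rangle$, the push-forward along the graph $\Gamma_\ast\colon \gm\gm\hookrightarrow \gm\gm\gm$ of $\tildediv(\Gamma_\ast^\ast D)$. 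The three pulled-back rational functions are $(t_1^{n+1}-1)/(t_1^{n+1}-t_1t_2)$, $(t_1^{n+1}-1)/(t_1^{n+1}-t_1)$ and $(t_1^{n+1}-1)/(t_1^{n+1}-t_2)$. A residue computation in the spirit of Lemma \ref{lem:degree}, performed at the codimension-one points in the supports of these divisors, expresses each term as a cycle in $C_1(\gm\gm,\KMW_1,\omega_{\gm})$, which can then be pushed forward along $\gm\gm\to\gm$ to yield elements of $\KMW_0(\gm)$.

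The conceptual reason these three contributions combine into $\eta[t]$ is the $\eta$-twisted logarithmic relation $[t_1t_2]=[t_1]+[t_2]+\eta[t_1][t_2]$: the term coming from $\tilde\gamma_\mu$ produces the symbol $[t_1t_2]$, while those coming from $\tilde\gamma_1$ and $\tilde\gamma_2$ produce $[t_1]$ and $[t_2]$, so after the alternating sum only the $\eta$-quadratic remainder survives, and after the final push-forward it collapses to $\eta[t]$. The main obstacle will be the careful bookkeeping of the line-bundle twists in the Rost--Schmid complex (including the canonical isomorphism \eqref{eq:orientation} and the signs arising from Remark \ref{rem:commutativity}), together with verifying that the auxiliary residues supported at the zero locus of $t_1^{n+1}-1$ and at infinity cancel among the three contributions in exactly the same way as in Lemma \ref{lem:degree}.
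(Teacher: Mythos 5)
Your proposal reproduces the paper's own argument: lift $\alpha$ to $\tilde\gamma_\mu-\tilde\gamma_1-\tilde\gamma_2\in\tcor k(\gm\gm,\gm)$, reduce to computing $\rho_n$ via Theorem~\ref{thm:weakcancellation}, and evaluate by pulling back the Cartier divisor along the three graph embeddings, pushing forward to $\KMW_0(\gm)$, and finally projecting onto the pointed component $\tc\{1\}(\Spec k)$ via the connecting map $\KMW_0(\gm)\to\KMW_{-1}(k)$ (a last step your ``collapses to $\eta[t]$'' compresses). The only inessential difference is the choice of suspension coordinate: the paper uses $g_n=(t_2^{n+1}-1)/(t_2^{n+1}-t_3)$ with $n=1$ (proper intersection with $\supp\tilde\gamma_2$ forces $n\geq 1$) and pushes forward to the $t_1$-factor, whereas you take $t_1$ as the suspension coordinate and push to $t_2$ --- equivalent by the symmetry of $\tilde\gamma_\mu-\tilde\gamma_1-\tilde\gamma_2$ in its two source variables.
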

\begin{proof}
Let $t_1,t_2,t_3$ denote the respective coordinates of $\gm\gm \gm$ so that corresponding supports are given by $\supp{\tilde\gamma_\mu}=V(t_3-t_1t_2)$, $\supp{\tilde\gamma_1}=V(t_3-t_1)$, and $\supp{\tilde\gamma_2}=V(t_3-t_2)$. 
The Cartier divisors we want to employ are defined by the equation $g_n:=(t_2^{n+1}-1)/(t_2^{n+1}-t_3)$ for some $n\geq 0$.
We note that $D(g_n)$ intersects properly with both $\supp{\tilde\gamma_\mu}$ and $\supp{\tilde\gamma_1}$ if $n\geq 0$, 
while $D(g_n)$ and $\supp{\tilde\gamma_2}$ intersects properly if $n\geq 1$.
For this reason we set $D:=D(g_1)$. 

Next we compute the intersection product $D\cdot (\tilde\gamma_\mu-\tilde\gamma_1-\tilde\gamma_2)$. 
By the projection formula, 
we get 
\[
D\cdot \tilde\gamma_\mu=\langle -1\rangle(\Gamma_\mu)_*(\langle 1\rangle)\cdot D=(\Gamma_\mu)_*(\Gamma_\mu^*D\cdot \langle -1\rangle).
\]
Note that $D_\mu:=\Gamma_\mu^*D$ is the principal Cartier divisor associated to the rational function
\[
g_\mu:=(t_2^2-1)/(t_2^2-t_1t_2)=t_2^{-1}(t_2^2-1)/(t_2-t_1).
\]
Its associated divisor is given explicitly by
\[
\tildediv(D_\mu)=\langle 2\rangle\langle 1-t_1\rangle\otimes (t_2-1)+\langle -2\rangle \langle 1+t_1\rangle\otimes (t_2+1)-\langle t_1\rangle\langle 1-t_1^2\rangle\otimes (t_2-t_1).
\]
Similarly for $\tilde\gamma_i$ we get Cartier divisors $D_i:=\Gamma_i^*D$ for which 
\[
\tildediv(D_1)=\langle 2\rangle \langle 1-t_1\rangle\otimes (t_2-1)+\langle -2\rangle \langle 1-t_1\rangle\otimes (t_2+1)-\langle 1-t_1\rangle\otimes (t_2^2-t_1),
\]
and 
\[
\tildediv(D_2)=\langle 1\rangle \otimes (t_2+1).
\]
Pushing forward along $\Gamma_\mu$, $\Gamma_1$, $\Gamma_2$, 
and cancelling terms, 
we find an expression for $D\cdot (\tilde\gamma_\mu-\tilde\gamma_1-\tilde\gamma_2)$ of the form $\langle -1\rangle\beta dt_3$, 
where $\beta$ equals
\[
-\langle t_1(1-t_1^2)\rangle\otimes (t_2-t_1)\wedge (t_3-t_2^2)+\langle 1-t_1\rangle\otimes (t_2^2-t_1)\wedge (t_3-t_2^2)-\langle 1\rangle \otimes (t_2+1)\wedge (t_3-1).
\]
Next we need to modify the orientation following (\ref{eq:orientation}), 
see \cite[\S2.2, Example 4.14]{Calmes14b} for the exact meaning,
which in our case amounts to multiplying by $dt_2$ on the left. 
This yields the expression $\beta dt_2\wedge dt_3$ which takes the form
\[
-\langle t_1(1-t_1^2)\rangle +\langle 1-t_2^2\rangle\langle 2t_2\rangle -\langle 1\rangle.
\]
Here, 
the first factor belongs to $\KMW_0(k(x_1))$ for $x_1=(t_2-t_1,t_3-t_2^2)$, 
the second factor to $\KMW_0(k(x_2))$ for $x_2=(t_2^2-t_1,t_3-t_2^2)$, 
and the third factor to $\KMW_0(k(x_3))$ for $x_3=(t_2+1,t_3-1)$. 
In the next step we take the push-forward to 
$$
\KMW_0(\gm)=\KMW_0(k[t_1^{\pm 1}])\subset \KMW_0(k(t_1)).
$$ 
Since $k(x_1)=k(t_1)$ and $k(x_3)=k(t_1)$, the push-forwards of the first and third factors are evident. 
Note that $k(x_2)/k(t_1)$ is a degree $2$ field extension of the form $k(x_2)=k(t_2)=k(t_1)[t_2]/(t_2^2-t_1)$.
Thus we need to push-forward the form $\langle 1-t_2^2\rangle\langle 2t_2\rangle$ using the trace map, 
see \cite[Lemma 6.8]{Calmes14b}. 
By the projection formula, the result is the product of $\langle 1-t_2^2\rangle=\langle 1-t_1\rangle$ with the push-forward of $\langle 2t_2\rangle$. 
An easy computation shows the push-forward of $\langle 2t_2\rangle$ is hyperbolic, 
and we find that the push-forward of $\beta dt_2\wedge dt_3$ along the projection $\gm\gm\gm\to \gm$ to the first factor equals 
\[
\nu:=-\langle t_1(1-t_1^2)\rangle+\langle 1,-1\rangle -\langle 1\rangle=\langle -1\rangle -\langle t_1(1-t_1^2)\rangle.
\]
Finally, 
we need to isolate the "pointed" component of $\nu\in \tcor k(\gm,\Spec(k))$, 
i.e., 
to compute its image along the projection 
$$
\tcor k(\gm,\Spec(k))\to \tc\{1\}(\Spec(k)).
$$ 
This is obtained via the connecting homomorphism
\[
\KMW_0(\gm)\to \H_{\{0\}}^1(\A^1,\KMW_0)=\KMW_{-1}(k),
\]
under which $\nu\mapsto -\eta[t_1]=\langle -1\rangle \eta[t_1]$. 
Using Theorem \ref{thm:weakcancellation}, we can now conclude that $\alpha$ is $\A^1$-homotopic to $s(\eta)$. 
\end{proof}

\section{Milnor-Witt motivic cohomology}
\label{MWmcs}

In this section we lay the foundations for Milnor-Witt motivic cohomology from the perspective of motivic functors and structured motivic ring spectra following the work on motivic cohomology in \cite{DRO}.
This is of interest in the context of the very effective slice filtration \cite[\S5]{SO} due to recent work on hermitian $K$-theory in \cite{2016arXiv161001346B}.

A motivic space with Milnor-Witt tranfers is an additive contravariant functor
$$
\mathcal{F}
\colon
\tcor k
\to
\sAb
$$
from the category of Milnor-Witt correspondences to simplicial abelian groups.
Let $\tm_k$ denote the functor category comprised of motivic spaces with Milnor-Witt tranfers. 
By composition with the opposite of the graph functor $\Sm_{k}\to\tcor k$ (described explicitly in \cite[\S 4.3]{Calmes14b}) we obtain the forgetful functor 
$$
u\colon\tm_k\to \m_k
$$ 
to motivic spaces over $k$,
i.e., 
the category of simplicial presheaves on the Nisnevich site of $\Sm_{k}$.
By adding Milnor-Witt transfers to motivic spaces we obtain a left adjoint functor of $u$, 
denoted by
$$
\tZ_{\text{tr}}
\colon
\m_k
\to 
\tm_k.
$$
More precisely, 
$\tZ_{\text{tr}}$ is the left Kan extension determined by setting
$$
\tZ_{\text{tr}}(h_{X}\wedge \Delta^{n}_{+})
:=
\tcor k(-,X)\otimes\mathbb{Z}[\Delta^{n}].
$$
Here,
$h_{X}$ denotes the motivic space represented by $X\in\Sm_{k}$.
Recall that a motivic functor is an $\m_k$-enriched functor from finitely presented motivic spaces $\f\m_{k}$ to $\m_{k}$ \cite[Definition 3.1]{DRO}.
We write $\mathbf{MF}_{k}$ for the closed symmetric monoidal category of motivic functors with unit the full embedding $\f\m_{k}\subset\m_{k}$.
A motivic functor $\mathcal{X}$  is "continuous" in the sense that it induces for all $A,B\in\f\m_{k}$ a map of internal hom objects
$$
\m(A,B)
\to
\m(\mathcal{X}(A),\mathcal{X}(B)),
$$
which is compatible with the enriched composition and identities.

\begin{defn}
As a motivic functor, 
Milnor-Witt motivic cohomology $\mathbf{M}\widetilde{\mathbb{Z}}$ is the composite
$$
\f\m_{k}
\xrightarrow{\subset}
\m_{k}
\xrightarrow{\tZ_{\text{tr}}}
\tm_{k}
\xrightarrow{u}
\m_{k}.
$$
\end{defn}
We note that $\mathbf{M}\widetilde{\mathbb{Z}}$ is indeed a motivic functor since for all $A,B\in\f\m_{k}$ there exist natural maps 
$$
\m(A,B)\wedge u\tZ_{\text{tr}}(A)
\to
u\tZ_{\text{tr}}\m(A,B)\wedge u\tZ_{\text{tr}}(A)
\to
u\tZ_{\text{tr}}(\m(A,B)\wedge A)
\to
u\tZ_{\text{tr}}(B).
$$

\begin{lem}
\label{lem:MZcommutativemonoid}
Milnor-Witt motivic cohomology $\mathbf{M}\widetilde{\mathbb{Z}}$ is a commutative monoid in $\mathbf{MF}_{k}$.
\end{lem}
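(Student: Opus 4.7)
The plan is to construct a lax symmetric monoidal structure on the endofunctor $u\circ\tZ_{\text{tr}}$ of $\m_k$ and to deduce from it that $\mathbf{M}\widetilde{\mathbb{Z}}$ is a commutative monoid in $\mathbf{MF}_k$, in parallel with the treatment of ordinary motivic cohomology in \cite{DRO}. The starting input is that $\tcor k$ is a symmetric monoidal additive category under the cartesian product of smooth schemes and the external product of finite Milnor-Witt correspondences, as already recalled in the introduction, and that the graph functor $\tilde\gamma:\Sm_k\to \tcor k$ is strong symmetric monoidal.

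First, I would equip $\tm_k$ with a closed symmetric monoidal structure $(\otimes_{\mathrm{tr}},\uHom)$ extending the tensor product on Milnor-Witt presheaves defined in \cite[\S1.2.13]{Deglise16}, characterized on representables by the Day-convolution formula $\tcor k(-,X)\otimes_{\mathrm{tr}}\tcor k(-,Y)\simeq \tcor k(-,X\times Y)$. The left adjoint $\tZ_{\text{tr}}:\m_k\to\tm_k$ is then strong symmetric monoidal: since $\m_k$ and $\tm_k$ are generated under colimits by the representables $h_X\wedge\Delta^n_+$ and $\tcor k(-,X)\otimes \Z[\Delta^n]$ respectively, the required natural isomorphism $\tZ_{\text{tr}}(A\wedge B)\simeq \tZ_{\text{tr}}(A)\otimes_{\mathrm{tr}}\tZ_{\text{tr}}(B)$ reduces on generators to $h_X\wedge h_Y\simeq h_{X\times Y}$ together with the defining property of $\otimes_{\mathrm{tr}}$.

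By standard doctrinal adjunction, the right adjoint $u$ inherits a canonical lax symmetric monoidal structure, with structure maps
\[
u\tZ_{\text{tr}}(A)\wedge u\tZ_{\text{tr}}(B)
\to
u\bigl(\tZ_{\text{tr}}(A)\otimes_{\mathrm{tr}}\tZ_{\text{tr}}(B)\bigr)
\simeq
u\tZ_{\text{tr}}(A\wedge B)
\]
natural in $A,B\in\f\m_k$, and with unit $S^0\to u\tZ_{\text{tr}}(S^0)=\K_0^{\MW}$ picking out $\1$. Restricting to $\f\m_k$ yields exactly the data of a commutative monoid structure on $\mathbf{M}\widetilde{\mathbb{Z}}$ in $\mathbf{MF}_k$ with respect to the Day convolution on motivic functors described in \cite[\S3]{DRO}; associativity, unit, and commutativity axioms follow formally from the symmetric monoidality of the adjunction $(\tZ_{\text{tr}},u)$.

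The main obstacle is organisational rather than substantive: one has to set up the closed symmetric monoidal structure on $\tm_k$ so that it is coherent with the pre-existing tensor product of Milnor-Witt presheaves recalled in the introduction, and verify that the symmetry isomorphism in $\tcor k$, inherited from the cartesian product of smooth schemes, produces no spurious signs at the level of Milnor-Witt correspondences. Once this bookkeeping is done, the argument is a direct transcription of the corresponding statement for ordinary motivic cohomology in \cite{DRO}.
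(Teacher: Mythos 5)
Your proposal is correct and takes essentially the same route as the paper: show that $\tZ_{\text{tr}}$ is strong symmetric monoidal (the paper deduces this from the graph functor $\Sm_k\to\tcor k$ being strict symmetric monoidal), deduce by the adjunction that $u$ is lax symmetric monoidal, and then read off the multiplication $u\tZ_{\text{tr}}(A)\wedge u\tZ_{\text{tr}}(B)\to u(\tZ_{\text{tr}}(A)\otimes\tZ_{\text{tr}}(B))\to u\tZ_{\text{tr}}(A\wedge B)$. The paper is briefer and does not spell out the reduction to representables or the Day-convolution formalism, but the underlying argument is identical.
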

\begin{proof}
We note that $\tm_{k}$ is a closed symmetric monoidal category using \cite[\S1.2.13]{Deglise16}.
Clearly the graph functor $\Sm_{k}\to\tcor k$ is strict symmetric monoidal. 
Forgetting the additive structure furnished by Milnor-Witt correspondences is a lax symmetric monoidal functor.
By adjointness it follows that $u$ is lax symmetric monoidal, 
and $\tZ_{\text{tr}}$ is strict symmetric monoidal.
This yields the desired multiplicative structure on $\mathbf{M}\widetilde{\mathbb{Z}}$ via the natural maps
$$
u\tZ_{\text{tr}}(A)\wedge u\tZ_{\text{tr}}(B)
\to
u(\tZ_{\text{tr}}(A)\otimes\tZ_{\text{tr}}(B))
\to
u(\tZ_{\text{tr}}(A\wedge B))
$$
defined for all $A,B\in\f\m_{k}$.
\end{proof}
\begin{rem}
Lemma \ref{lem:MZcommutativemonoid} and \cite[Theorem 4.2]{DRO} show the category of modules over $\mathbf{M}\widetilde{\mathbb{Z}}$ is a cofibrantly generated monoidal model category satisfying the monoid axiom.
\end{rem}

If $\mathcal{X}$ is a motivic functor the $\m$-enrichment yields an induced map 
$$
A\wedge \mathcal{X}(B)\to\mathcal{X}(A\wedge B)
$$ 
for all finitely presented $A,B\in\f\m_{k}$.
In particular, 
for the Thom space $T:=\aone/\aone-\{0\}$ of the trivial line bundle on $\aone$, 
there are maps of motivic spaces $T\wedge \mathcal{X}(T^{n})\to\mathcal{X}(T^{n+1})$ for $n\geq 0$.
By \cite[\S3.7]{DRO} this yields a lax symmetric monoidal evaluation functor from $\mathbf{MF}_{k}$ to motivic symmetric spectra $\mathrm{Sp}^{\Sigma}(\m_{k},T)$ introduced in \cite{JardineMSS}.
Due to the Quillen equivalence between $\mathbf{MF}_{k}$ and $\mathrm{Sp}^{\Sigma}(\m_{k},T)$ \cite[Theorem 3.32]{DRO} we also write $\mathbf{M}\widetilde{\mathbb{Z}}$ for the corresponding 
motivic symmetric spectrum of Milnor-Witt motivic cohomology.
Equivalently, 
following the approach in \cite[\S4.2]{DRO},
we may view $\mathbf{M}\widetilde{\mathbb{Z}}$ as a motivic symmetric spectrum with respect to the suspension coordinate $(\mathbb{P}^{1},\infty)$.
Recall that a motivic symmetric spectrum $\mathcal{E}$ is fibrant if and only if it is an $\Omega_{T}$-spectrum, 
i.e., 
$\mathcal{E}_{n}$ is motivic fibrant and $\mathcal{E}_{n}\to \Omega_{T}\mathcal{E}_{n+1}$ is a motivic weak equivalence for all $n\geq 0$.
Voevodsky's cancellation theorem for finite correspondences implies the motivic cohomology spectrum is fibrant \cite[Theorem 6.2]{VoevodskyICM1998}.
In the context of Bredon motivic cohomology and the cancellation theorem for equivariant finite correspondences \cite[Theorem 9.7]{MR3398714}, 
this is shown in \cite[Theorem 3.4, Appendix A.4]{2016arXiv160207500H}.
By the same type of arguments we obtain:.
\begin{thm}
\label{MWfibrant}
If $k$ is an infinite perfect field then the Milnor-Witt motivic cohomology spectrum $\mathbf{M}\widetilde{\mathbb{Z}}$ is an $\Omega_{T}$-spectrum in $\mathrm{Sp}^{\Sigma}(\m_{k},T)$.
\end{thm}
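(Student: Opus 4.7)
The plan is to verify the two defining properties of an $\Omega_{T}$-spectrum — levelwise motivic fibrancy and the adjoint spectrum structure maps being motivic weak equivalences — following the template of Voevodsky's argument for ordinary motivic cohomology and its equivariant analogue in the Bredon setting.

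First, I would show that each level $\mathbf{M}\widetilde{\mathbb{Z}}(T^{n}) \simeq u\tZ_{\text{tr}}(T^{n})$ is motivic fibrant. Under the Dold--Kan correspondence this evaluation is modeled by the Suslin complex $\sus{\tZ(T^{n})}$ of Milnor-Witt sheaves. Theorem \ref{thm:strictly} provides that cohomology sheaves of an $\aone$-invariant Milnor-Witt presheaf are strictly $\aone$-invariant and that Nisnevich and Zariski cohomology agree on them. Feeding this into the hypercohomology spectral sequence for $\sus{\tZ(T^{n})}$ yields both Nisnevich descent and $\aone$-invariance of the represented hyper-Ext groups, which by the Morel--Voevodsky recognition principle for motivic fibrant objects is exactly the required fibrancy statement.

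Next, I would verify that the structure map $u\tZ_{\text{tr}}(T^{n}) \to \Omega_{T} u\tZ_{\text{tr}}(T^{n+1})$ is a motivic weak equivalence. Using the $\aone$-equivalence $T \simeq S^{1}\wedge \gm$, and the identification of $\gm$-suspension with $-\otimes \tZ\{1\}$ via \cite[Proposition 1.2.11]{Deglise16}, this reduces under Dold--Kan to the morphism of complexes
\[
\sus{\tZ(T^{n})} \longrightarrow \derR\uHom\bigl(\tZ\{1\},\, \sus{\tZ(T^{n}) \otimes \tZ\{1\}}\bigr)
\]
arising from the unit of the adjunction $(-\otimes \tZ\{1\}, \uHom(\tZ\{1\},-))$. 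Corollary \ref{cor:cancel}, an immediate consequence of the cancellation Theorem \ref{thm:weakcancellation}, asserts precisely that evaluation of this map on any $X \in \Sm_{k}$ is a quasi-isomorphism of complexes. This is exactly the condition that the structure map is a sectionwise, hence motivic, weak equivalence.

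The main obstacle is translation rather than new mathematical content: one must reconcile several frameworks — the motivic functor model, motivic symmetric $T$-spectra, simplicial Milnor-Witt presheaves with transfers under Dold--Kan, and complexes of Milnor-Witt sheaves — and match the symmetric group action permuting the $T$-factors with the cancellation equivalence at each iterated smash power. Ensuring that Corollary \ref{cor:cancel} applies compatibly to $T^{n}\simeq (S^{1})^{\wedge n}\wedge \gm^{\wedge n}$ for all $n \geq 0$ is the most delicate bookkeeping step; once organized, the argument transplants that of \cite[Appendix A.4]{2016arXiv160207500H} verbatim to the Milnor-Witt context.
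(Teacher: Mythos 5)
Your proposal takes essentially the same approach as the paper, which itself only gestures at the proof by pointing to \cite[Theorem 6.2]{VoevodskyICM1998} and \cite[Theorem 3.4, Appendix A.4]{2016arXiv160207500H} and asserting that the same type of argument transfers to the Milnor-Witt setting. Your fleshed-out two-step plan — levelwise motivic fibrancy via strict $\aone$-invariance and Nisnevich/Zariski comparison (Theorem \ref{thm:strictly}), plus the structure maps via the cancellation result (Corollary \ref{cor:cancel}) — is precisely the content being invoked; the one point to state more carefully is that one must pass to the Suslin/fibrant replacement of $u\tZ_{\text{tr}}(T^n)$ before invoking $\aone$-locality, since $\tZ_{\text{tr}}(T^n)$ is not itself $\aone$-invariant, but this is exactly the bookkeeping already present in the cited Bredon argument.
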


\begin{rem}
\label{rem:MWrepresentability}
Along the lines of \cite[Theorem 3.4]{2016arXiv160207500H} one can show that $\mathbf{M}\widetilde{\mathbb{Z}}$ represents Milnor-Witt motivic cohomology groups as defined in $\DMt$
\cite[Definition 3.3.6]{Deglise16}.
\end{rem}

Due to work of Morel \cite{Morel04b} the integral graded homotopy module of the unramified Milnor-Witt $K$-theory sheaf corresponds to a motivic spectrum $\mathbf{M}^{\heartsuit}\widetilde{\mathbb{Z}}$ 
in the heart $\SH^{\heartsuit}(k)$ of the homotopy $t$-structure on the stable motivic homotopy category $\SH(k)$.
Here the equivalence of categories between homotopy modules, 
i.e., 
sequences $\{\mathcal{F}_{n}\}_{n\in\Z}$ of strictly $\aone$-homotopy invariant sheaves of abelian groups together with contraction isomorphisms $\mathcal{F}_{n}\cong (\mathcal{F}_{n+1})_{(-1)}$ for all $n\geq 0$,
and the heart of the stable motivic homotopy category is induced by the stable $\aone$-homotopy sheaf $\underline{\pi}_{0,\ast}$ with inverse $\mathbf{M}^{\heartsuit}$.
Recall that for every $\mathcal{E}\in\SH(k)$ the Nisnevich sheaf $\underline{\pi}_{s,t}(\mathcal{E})$ on $\Sm_{k}$ is obtained from the presheaf
$$
X
\mapsto
\SH(k)(\Sigma^{s,t}X_{+},\mathcal{E}), 
\text{ where } s,t\in\Z.
$$
By using basic properties of effective homotopy modules, 
Bachmann \cite[Lemma 12]{2016arXiv161001346B} shows there is an isomorphism of Nisnevich sheaves
$$
\underline{\pi}_{\ast,\ast}\mathbf{M}^{\heartsuit}\widetilde{\mathbb{Z}}
\cong
\K_{\ast}^{\MW}.
$$

\begin{lem}
There is a canonically induced isomorphism of Nisnevich sheaves
$$
\K_{\ast}^{\MW}
\xrightarrow{\cong}
\underline{\pi}_{\ast,\ast}\mathbf{M}\widetilde{\mathbb{Z}}.
$$
\end{lem}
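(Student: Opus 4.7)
The plan is to combine three inputs: the commutative ring spectrum structure on $\mathbf{M}\widetilde{\mathbb{Z}}$ (Lemma \ref{lem:MZcommutativemonoid}), the $\Omega_{T}$-spectrum property (Theorem \ref{MWfibrant}), and the diagonal identification of Milnor-Witt motivic cohomology with $\K_\ast^{\MW}$ from \cite[Theorem 4.2.3]{Deglise16}, together with Morel's isomorphism $\underline{\pi}_{\ast,\ast}\1 \cong \K_\ast^{\MW}$.

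First, I would construct the canonical morphism. The unit of the commutative monoid structure on $\mathbf{M}\widetilde{\mathbb{Z}}$ furnished by Lemma \ref{lem:MZcommutativemonoid} gives a map of motivic ring spectra $\1 \to \mathbf{M}\widetilde{\mathbb{Z}}$. Applying $\underline{\pi}_{\ast,\ast}$ and invoking the isomorphism $\underline{\pi}_{\ast,\ast}\1 \cong \K_\ast^{\MW}$ from \cite[Theorem 6.2.1]{Morel04b} produces the desired natural map $\K_\ast^{\MW} \to \underline{\pi}_{\ast,\ast}\mathbf{M}\widetilde{\mathbb{Z}}$. This is automatically compatible with the ring and contraction structures, so it is a map of homotopy modules (equivalently, of graded strictly $\aone$-invariant Nisnevich sheaves).

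Second, I would verify this map is an isomorphism. The $\Omega_{T}$-spectrum property from Theorem \ref{MWfibrant} ensures that $\underline{\pi}_{s,t}\mathbf{M}\widetilde{\mathbb{Z}}$ can be computed levelwise from the individual spaces $u\tZ_{\mathrm{tr}}(T^{\wedge n})$; combined with Remark \ref{rem:MWrepresentability}, these values identify with the Milnor-Witt motivic cohomology sheaves $\H^{\ast,\ast}_{\mathrm{MW}}$. The problem therefore reduces to the identification of the integrally graded diagonal part of $\H^{\ast,\ast}_{\mathrm{MW}}$ with $\K_\ast^{\MW}$ as sheaves of graded rings, which is precisely \cite[Theorem 4.2.3]{Deglise16}. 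In the indexing where $\underline{\pi}_{\ast,\ast}\mathbf{M}\widetilde{\mathbb{Z}}$ is interpreted via the contraction isomorphisms of a homotopy module, the off-diagonal terms are determined by the diagonal ones, so no separate vanishing argument is required.

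A conceptually cleaner alternative is to show directly that $\mathbf{M}\widetilde{\mathbb{Z}} \simeq \mathbf{M}^{\heartsuit}\widetilde{\mathbb{Z}}$ in $\SH(k)$, using Theorem \ref{MWfibrant} to place $\mathbf{M}\widetilde{\mathbb{Z}}$ in the heart of the homotopy $t$-structure and then appealing to \cite[Lemma 12]{2016arXiv161001346B}. The main obstacle in either approach is verifying that $\mathbf{M}\widetilde{\mathbb{Z}}$ lies in $\SH^{\heartsuit}(k)$: the $\Omega_{T}$-property gives connectivity and the Milnor-Witt transfers give strict $\aone$-invariance at each level, but one must carefully match the resulting data to the homotopy module $\{\K_n^{\MW}\}_{n\in \Z}$ via the contraction isomorphisms, and this matching is what ultimately pins down the canonical isomorphism.
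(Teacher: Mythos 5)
Your verification step uses exactly the same two inputs the paper invokes---Remark \ref{rem:MWrepresentability} (representability of Milnor-Witt motivic cohomology by $\mathbf{M}\widetilde{\mathbb{Z}}$) together with D\'eglise--Fasel's identification of the diagonal Milnor-Witt motivic cohomology sheaves with unramified Milnor-Witt $K$-theory---so the core of your argument matches the paper's one-line proof. What you add, and what the paper leaves implicit, is an explicit construction of the ``canonically induced'' map via the unit $\1 \to \mathbf{M}\widetilde{\mathbb{Z}}$ of the commutative monoid structure from Lemma \ref{lem:MZcommutativemonoid} and Morel's isomorphism $\underline{\pi}_{\ast,\ast}\1 \cong \K_\ast^{\MW}$; this is a legitimate and clarifying elaboration rather than a different route, and it correctly reduces the check to seeing that the induced ring endomorphism of $\K_\ast^{\MW}$ is the identity.
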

\begin{proof}
This follows from \cite[Theorem 4.2.2]{Deglise16} together with the representability of Milnor-Witt motivic cohomology stated in Remark \ref{rem:MWrepresentability}.
\end{proof}

It is very plausible that $\mathbf{M}^{\heartsuit}\widetilde{\mathbb{Z}}$ and $\mathbf{M}\widetilde{\mathbb{Z}}$ are isomorphic, 
as noted in \cite[\S1]{2016arXiv161001346B}. 
Having such an isomorphism would be of interest in the context of the very effective slice filtration \cite[\S5]{SO} due to the work on hermitian $K$-theory $\KQ$ in \cite{2016arXiv161001346B}.
Combined with \cite[Theorem 10]{2016arXiv161001346B} and the fact that the unit map $\1\to\KQ$ becomes an isomorphism after applying the zeroth effective slice functor $\widetilde{s}_{0}$, 
we would be able to conclude there is a distinguished triangle 
$$
\Sigma^{1,0}\mathbf{M}\mathbb{Z}/2
\to
\widetilde{s}_{0}\1
\to
\mathbf{M}\widetilde{\mathbb{Z}}
$$
in $\SH(k)$, 
expressing the zeroth effective slice of the motivic sphere spectrum $\mathbf{1}$ as an extension of mod $2$ motivic cohomology $\mathbf{M}\mathbb{Z}/2$ and integral Milnor-Witt motivic cohomology 
$\mathbf{M}\widetilde{\mathbb{Z}}$.
This would provide a powerful computational tool since every effective slice is a module over $\widetilde{s}_{0}\1$ according to \cite[\S3.3, \S6(iv),(v)]{GRSO}.



\begin{footnotesize}
\bibliographystyle{alpha}
\bibliography{General}

\begin{thebibliography}{GRS{\O}12}

\bibitem[AF17]{Asok13}
A.~Asok and J.~Fasel.
\newblock Comparing {E}uler classes.
\newblock To appear in Quart. J. Math. doi:10.1093/qmath/haw033, 2017.

\bibitem[{Bac}16]{2016arXiv161001346B}
T.~{Bachmann}.
\newblock {The generalized slices of hermitian K-Theory}.
\newblock {\em ArXiv e-prints}, October 2016.

\bibitem[CF14]{Calmes14b}
B.~Calm\`es and J.~Fasel.
\newblock The category of finite {MW}-correspondences.
\newblock arXiv:1412.2989, 2014.

\bibitem[CF17a]{Calmes14c}
B.~Calm\`es and J.~Fasel.
\newblock A comparison theorem for {MW}-motivic cohomology.
\newblock 2017.

\bibitem[CF17b]{Calmes17}
B.~Calm\`es and J.~Fasel.
\newblock A primer for {M}ilnor-{W}itt {$K$}-theory sheaves and their
  cohomology.
\newblock 2017.

\bibitem[DF16]{Deglise16}
F.~D{\'e}glise and J.~Fasel.
\newblock {MW}-motivic complexes.
\newblock In preparation, 2016.

\bibitem[DR{\O}03]{DRO}
B.~I. Dundas, O.~R\"ondigs, and P.~A. {\O}stv{\ae}r.
\newblock Motivic functors.
\newblock {\em Doc. Math.}, 8:489--525, 2003.

\bibitem[Fas07]{Fasel07}
J.~Fasel.
\newblock The {C}how-{W}itt ring.
\newblock {\em Doc. Math.}, 12:275--312 (electronic), 2007.

\bibitem[Fas08]{Fasel08a}
J.~Fasel.
\newblock Groupes de {C}how-{W}itt.
\newblock {\em M\'em. Soc. Math. Fr. (N.S.)}, 113:viii+197, 2008.

\bibitem[Ful98]{Fulton98}
W.~Fulton.
\newblock {\em Intersection theory}, volume~2 of {\em Ergebnisse der Mathematik
  und ihrer Grenzgebiete. 3. Folge. A Series of Modern Surveys in Mathematics
  [Results in Mathematics and Related Areas. 3rd Series. A Series of Modern
  Surveys in Mathematics]}.
\newblock Springer-Verlag, Berlin, second edition, 1998.

\bibitem[GRS{\O}12]{GRSO}
J.~J. Guti\'errez, O.~R\"ondigs, M.~Spitzweck, and P.~A. {\O}stv{\ae}r.
\newblock Motivic slices and coloured operads.
\newblock {\em J. Topol.}, 5(3):727--755, 2012.

\bibitem[HV{\O}15]{MR3398714}
J.~Heller, M.~Voineagu, and P.~A. {\O}stv{\ae}r.
\newblock Equivariant cycles and cancellation for motivic cohomology.
\newblock {\em Doc. Math.}, 20:269--332, 2015.

\bibitem[HV{\O}16]{2016arXiv160207500H}
J.~{Heller}, M.~{Voineagu}, and P.~A. {{\O}stv{\ae}r}.
\newblock {Topological comparison theorems for Bredon motivic cohomology}.
\newblock {\em ArXiv e-prints}, February 2016.

\bibitem[Jar00]{JardineMSS}
J.~F. Jardine.
\newblock Motivic symmetric spectra.
\newblock {\em Doc. Math.}, 5:445--553, 2000.

\bibitem[Mor04]{Morel04b}
F.~Morel.
\newblock On the motivic stable {$\pi_0$} of the sphere spectrum.
\newblock In J.P.C Greenless, editor, {\em Axiomatic, {E}nriched and {M}otivic
  {H}omotopy {T}heory}, pages 219--260. Kluwer {A}cademic {P}ublishers, 2004.

\bibitem[Mor12]{Morel08}
F.~Morel.
\newblock {\em $\mathbb {A}^1$-{A}lgebraic {T}opology over a {F}ield}, volume
  2052 of {\em Lecture Notes in Math.}
\newblock Springer, New York, 2012.

\bibitem[MVW06]{Mazza06}
C.~Mazza, V.~Voevodsky, and C.~Weibel.
\newblock {\em Lecture notes on motivic cohomology}, volume~2 of {\em Clay
  Mathematics Monographs}.
\newblock American Mathematical Society, Providence, RI, 2006.

\bibitem[S{\O}12]{SO}
M.~Spitzweck and P.~A. {\O}stv{\ae}r.
\newblock Motivic twisted {$K$}-theory.
\newblock {\em Algebr. Geom. Topol.}, 12(1):565--599, 2012.

\bibitem[Voe98]{VoevodskyICM1998}
V.~Voevodsky.
\newblock {$\mathbf{A}^1$}-homotopy theory.
\newblock In {\em Proceedings of the {I}nternational {C}ongress of
  {M}athematicians, {V}ol. {I} ({B}erlin, 1998)}, volume Extra Vol. I, pages
  579--604 (electronic), 1998.

\bibitem[Voe10]{Voevodsky10}
V.~Voevodsky.
\newblock Cancellation theorem.
\newblock {\em Doc. Math. Extra Volume: Andrei A. Suslin Sixtieth Birthday},
  pages 671--685, 2010.

\bibitem[VSF00]{VSF}
V.~Voevodsky, A.~Suslin, and E.~M. Friedlander.
\newblock {\em Cycles, transfers, and motivic homology theories}, volume 143 of
  {\em Annals of Mathematics Studies}.
\newblock Princeton University Press, Princeton, NJ, 2000.

\end{thebibliography}
\end{footnotesize}
\end{document}